\documentclass[UTF-8,reqno,12pt]{amsart}
\usepackage{enumerate}
\usepackage{mhequ}

\usepackage{amssymb}

\usepackage{dutchcal}

\usepackage{bbm}
\usepackage{geometry}

\usepackage{enumitem}  
\newlist{hypothesis}{enumerate}{1}
\setlist[hypothesis]{label=({\bfseries A}\arabic*),leftmargin=*}  

\usepackage{amssymb,url,color, booktabs,nccmath}
\usepackage{mathrsfs}
\usepackage{graphicx}
\usepackage{tikz}
\usepackage{amsthm}
\usepackage{comment}

\usepackage{relsize}

\usepackage{color}
\usepackage[colorlinks=true]{hyperref}
\hypersetup{
	linkcolor=blue,          
	citecolor=red,        
	filecolor=blue,      
	urlcolor=cyan
}

\definecolor{darkergreen}{rgb}{0.0, 0.5, 0.0}

\makeatletter
		\renewcommand{\subsection}{\@startsection
			{subsection} 
			{2} 
			{0mm} 
			{0.5\baselineskip} 
			{0.3\baselineskip} 
			{\normalfont\normalsize\raggedright}} 
\makeatother
  
\allowdisplaybreaks[4]

\usepackage{fancyhdr} 
\numberwithin{equation}{section}

\newcommand{\be}{\begin{eqnarray}}
\newcommand{\ee}{\end{eqnarray}}
\newcommand{\ce}{\begin{eqnarray*}}
\newcommand{\de}{\end{eqnarray*}}
\newtheorem{theorem}{Theorem}[section]
\newtheorem{lemma}[theorem]{Lemma}
\newtheorem{remark}[theorem]{Remark}

\newtheorem{example}[theorem]{Example}

\newtheorem*{theorem*}{Theorem}
\newtheorem*{remark*}{Remark}

\def\geq{\geqslant}
\def\leq{\leqslant}

 \def\R{\mathbb R}
 \def\R{\mathbb R}    
\def\N{\mathbb N}  
   
\def\<{\langle} \def\>{\rangle}

\allowdisplaybreaks

\def\E{\mathbb{E}}
\def\P{\mathbb{P}}
\def\R{\mathbb{R}}

\def\d{\mathrm{d}}
\def\N{\mathbb{N}}

\def\bC{\mathbf{C}}

\def\1{\mathbbm{1}}
\def\e{\mathrm{e}}
\def\levy{L\'{e}vy}
\def\ito{It$\hat{\mathrm{o}}$}
\def\gron{Gr$\ddot{\mathrm {o}}$nwall}

\begin{document}

	\title{Weak solution for  
 distribution dependent SDEs driven by   L\'{e}vy noise}

	\author{Mingkun Ye}

    \address{School of Mathematics, Sun Yat-sen University, Guangzhou 510275, China}
	\email{mingkunye@foxmail.com(M. Ye)}	
	
	\begin{abstract}
 	In this paper, we establish the existence of weak solutions for distribution-dependent stochastic differential equations (DDSDEs)  driven by a broad class of L\'{e}vy noises, where the drift coefficients satisfy specific integrability conditions. This is achieved through  the Krylov-type estimate and tightness argument. 
    \\
	{\it AMS Mathematics Subject Classification :} 60H10; 60J76	
	\\
		{\it Keywords}: Distribution dependent; Krylov's estimate;  Integrable condition; L\'{e}vy process
    \\ 
	\end{abstract}

	
	\date{\today}
	
	\maketitle

\setcounter{tocdepth}{2}
\tableofcontents

\section{Introduction}
There exists a lot  of results concerning SDEs with singular coefficients. Among these, notable findings pertain to equations driven by Brownian motion. It is noteworthy that significant progress has been made for the following SDE under integrable conditions:
\begin{equation}\label{EQ:1026:01}
    \d X_{t}=b\left(t, X_{t}\right) \d t+\sigma\left(t, X_{t}\right) \d W_{t}, \quad X_{0}=x_{0}, \quad t\geq 0,
\end{equation}
where $ d \geqslant 1 $, $ b:\mathbb{R}_{+} \times \mathbb{R}^{d} \rightarrow \mathbb{R}^{d} $ is a Borel measurable function satisfying for any $ T>0 $,
\begin{equation}\label{EQ:1026:02}
    \int_{0}^{T}\Big(\int_{\mathbb{R}^{d}}|b(t, x)|^{p} \d x\Big)^{{q}/{p}} \d t<\infty \quad \text { with } \quad q, p \in[2, \infty) \quad \text { and } \quad \frac{d}{p}+\frac{2}{q}<1,
\end{equation}
and $ \sigma:\mathbb{R}_{+} \times \mathbb{R}^{d} \rightarrow \mathbb{R}^{d} \times \mathbb{R}^{d} $ is a Borel measurable function, $ \left(B_{t}\right)_{t \geqslant 0} $ is a $ d $-dimensional standard Brownian motion. In their seminal work \cite{KRYLOV05PTRF}, Krylov and Röckner established the existence of a unique strong solution to \eqref{EQ:1026:01} under the assumption that $ \sigma=\mathbb{I}_{d\times d} $ and $ b $ satisfies \eqref{EQ:1026:02}. Zhang \cite{Zhang2005SPA, ZHANG11EJP} extended this result to the multiplicative noise under some non-degenerate and Sobolev conditions on the diffusion coefficient, using Zvonkin's transformation and establishing Krylov-type estimates. Recently, their works were further complemented by Xia et al. \cite{XIA2020SPA}, and Xie and Zhang \cite{Xie_Zhang_POINCARE_2020}.

For SDEs with distribution-valued drift coefficients and additive noise, such as when the drift coefficient belongs to the Kato class, which can be seen as an extension of the integrable condition, several studies have already been conducted; see Chen et al. \cite{Chen2012AoP}, Zhang and Zhao \cite{ZhangandZhao2018CMS}, Ren and Zhang \cite{RenZhang2025Bernoulli}, among others. Very recently, Veretennikov \cite{Veretennikov2024SD} established weak existence result for the degenerate DDSDE. Mishura and Veretennikov \cite{Veretennikov2020TPMS} established weak and strong existence and weak and strong uniqueness results for DDSDE under some relaxed regularity conditions.

In recent years, the study of distribution-dependent SDEs (DDSDEs for short), also known as McKean-Vlasov SDEs or mean field models, has garnered significant attention due to their wide range of applications. We present some classical results here. For Brownian motion driven DDSDEs, which general form can be formulated as follow 
\begin{equation}\label{EQ:0416:01}
    \d X_t= b(t,X_t,\mathscr{L}_{X_t})\d t+\sigma(t,X_t,\mathscr{L}_{X_t})\d W_t,
\end{equation}
where \( b: \mathbb{R}_{+} \times \mathbb{R}^{d} \times \mathcal{P}(\mathbb{R}^{d}) \rightarrow \mathbb{R}^{d} \) and \( \sigma: \mathbb{R}_{+} \times \mathbb{R}^{d} \times \mathcal{P}(\mathbb{R}^{d}) \rightarrow \mathbb{R}^{d} \otimes \mathbb{R}^{d} \) are two Borel measurable functions, \( W \) is a \( d \)-dimensional standard Brownian motion on some filtered probability space \( (\Omega, \mathcal{F},\left(\mathcal{F}_{t}\right)_{t \geq 0}, \mathbb{P}) \), \( \mathscr{L}_{X_t}:=\mathbb{P} \circ X_{t}^{-1} \) is the marginal distribution of \( X \) at time \( t \), and \( \mathcal{P}(\mathbb{R}^{d}) \) is the space of all probability measures over \((\mathbb{R}^{d}, \mathscr{B}(\mathbb{R}^{d})) \).

Wang \cite{Wang_SPA_2018} proved the well-posedness of DDSDEs under some one-side Lipschitz assumptions, utilizing a distributional iteration argument. Subsequently, Huang and Wang \cite{Huang_SPA_2019} established the wellposedness of DDSDEs with non-degenerate noise under integrability conditions on distribution-dependent coefficients.

As an extension of \cite{KRYLOV05PTRF} to SDE with distribution-dependent coefficients, Rockner and Zhang \cite{RocknerandZhang2021Bernoulli} demonstrated the strong well-posedness of the aforementioned DDSDE \eqref{EQ:0416:01} under certain integrability assumptions in the spatial variable and Lipschitz continuity in the measure variable concerning $b$ and $\sigma$. Recently, as a generalization of \cite{RocknerandZhang2021Bernoulli} in some sense, the existence of weak solutions to DDSDE \eqref{EQ:0416:01} was demonstrated in Zhao \cite{ZHAO2025PA} when $\sigma$ satisfies certain non-degeneracy and continuity assumptions, and when $b$ meets some integrability conditions, and continuity requirements in the (generalized) total variation distance. Moreover, weak uniqueness is established under additional continuity assumptions of Lipschitz type. It is worth noting that, in the result of weak solution existence, these conditions can be further relaxed to the drift coefficients belonging to a certain Kato class (see \cite[Remark 3.3]{ZHAO2025PA} for more details).

An important class of distribution-dependent equations, the Nemytskii-type DDSDE, also termed density-dependent stochastic differential equation (dDSDE for short), was first introduced in \cite[Section 2]{BARBUandROCKNER2018SMA} to provide a probabilistic representation for solutions of nonlinear Fokker-Planck equations. In a series of works, Barbu and Röckner systematically investigated the following Nemytskii-type DDSDE driven by Brownian motions (see, e.g., monograph \cite{Barbu2024}): 
$$
\mathrm{d} X_{t} = b\left(t, X_{t}, \rho_{t}\left(X_{t}\right)\right) \mathrm{d} t + \sigma\left(t, X_{t}, \rho_{t}\left(X_{t}\right)\right) \mathrm{d} W_{t}, \quad X_{0} \stackrel{d}{=} \mu_{0},
$$
where $ \sigma: \mathbb{R}_{+} \times \mathbb{R}^{d} \times \mathbb{R}_{+} \rightarrow \mathbb{R}^{d} \otimes \mathbb{R}^{d} $ is measurable, $ \rho_{t}(x) = \mathbb{P} \circ X_{t}^{-1}(\mathrm{d} x) / \mathrm{d} x $ denotes the distributional density of $ X_{t} $, and $ W $ is a standard $ d $-dimensional Brownian motion.

The study of SDEs driven by Lévy processes has also attracted considerable attention. For the SDE given by
\begin{equation}\label{EQ:0415:04}
\d X_t =b(t,X_t)\d t+\sigma(t,X_t)\d L_t,
\end{equation}
Gy\"ongy and Krylov established the existence and uniqueness of strong solutions under some one-sided Lipschitz conditions when driven by semi-martingales. Priola \cite{Priola2012OJM} proved pathwise uniqueness for SDEs driven by non-degenerate symmetric $\alpha$-stable Lévy processes with a bounded and Hölder continuous drift term. More recently, Butkovsky et al. \cite{butkovskyStrongRateConvergence2024} proved the well-posedness of SDE driven by a general Lévy process with Hölder continuous drift and $\sigma=\mathbb{I}_{d\times d}$ using the refined stochastic sewing lemma and fixed point argument.

Concerning Lévy processes  driven DDSDEs, since Brownian motion is a continuous Lévy process, it is natural to consider density-dependent SDEs driven by pure jump Lévy processes and moreover, general \levy~process. Building on these developments, researchers have begun to naturally investigate Lévy-driven distribution-dependent equations like
\begin{equation}\label{EQ:0415:03}
\mathrm{d} X_{t} = b\left(t,X_{t}, \mathscr{L}_{X_{t}}\right) \mathrm{d} t + \sigma\left(t,X_{t}, \mathscr{L}_{X_{t}}\right) \mathrm{d} L_{t}, \quad t \in [0, T],
\end{equation}
where \( T>0 \) is a fixed constant, \( \left(L_{t}\right)_{t \geq 0} \) is a Lévy process on a complete filtration probability space $ (\Omega,(\mathcal{F}_{t})_{t \in[0, T]}, \mathbb{P})$, and $ \mathscr{L}_{X_{t}} $ is the law of $ X_{t} $, and agiain for the space \( \mathcal{P}(\mathbb{R}^{d}) \) of all probability measures on \( \mathbb{R}^{d} \) equipped with the weak topology,
\[
b:[0, T] \times \mathbb{R}^{d} \times \mathcal{P}(\mathbb{R}^{d}) \rightarrow \mathbb{R}^{d}, \quad \sigma:[0, T] \times \R^d\times \mathcal{P}(\mathbb{R}^{d}) \rightarrow \mathbb{R}^{d} \otimes \mathbb{R}^{m}
\]
are measurable functions.

Huang and Yang \cite{HUANGandYANG2021} studied the well-posedness of DDSDE \eqref{EQ:0415:03} with Hölder continuous drift driven by additive $\alpha$-stable noise, employing Zvonkin-type transformations, Krylov-type estimates, and a (Picard) distributional iteration method.

Deng and Huang sequentially examined two types of DDSDEs driven by a standard $\alpha$-stable process. Specifically, in \cite{DENGandHUANG2023ARXIV}, they considered DDSDEs with distribution-free diffusions, where $L$ is an $\alpha$-stable process with $\alpha \in (1, 2)$. The primary analytical tools were Zvonkin’s transformation, a time-change technique, and a two-step fixed point argument. In \cite{DENGandHUANG2024ARXIV}, they investigated the DDSDE \eqref{EQ:0415:03}, where $L$ is an $\alpha$-stable process with $\alpha \in (1/2, 1)$. The main analytical tools included heat kernel estimates for distribution-independent stable SDEs and Banach’s fixed point theorem.

Recently, Hao et al. \cite{hao2024supercriticalmckeanvlasovsdedriven} established the well-posedness of the DDSDE \eqref{EQ:0415:03} driven by cylindrical $\alpha$-stable processes with $\alpha\in(0,1)$, using some priori estimates and Picard iteration. In their framework, the drift coefficient is Hölder continuous in the spatial variable and Lipschitz continuous with respect to the measure variable under a weighted total variation distance.

For the following Nemytskii-type DDSDE driven by an $\alpha$-stable process with Hölder drifts:
$$
\mathrm{d} X_{t}=b\left(t, X_{t}, \rho_{t}\left(X_{t}\right)\right) \mathrm{d} t+\mathrm{d} L_{t}, \quad X_{0} \stackrel{d}{=} \mu_{0},
$$
Wu and Hao established its well-posedness based on Schauder estimates in Besov spaces for nonlocal Fokker–Planck equations and Euler-Maruyama scheme in \cite{WUandHAO2023SPA}.

The paper is organized as follows.
In Section \ref{SEC:02}, we provide a concise introduction to several function spaces, including Hölder spaces, fractional Sobolev spaces, and Bessel potential spaces. We then briefly review Lévy processes and formulate  Assumption $($\hyperlink{(A)}{$\mathbf{A}$}$)$ on the driving Lévy process for the DDSDE. In addition, we establish a Krylov-type estimate for semimartingales under this framework. In Section \ref{SEC:03}, we propose Assumption $($\hyperlink{Htheta}{\(\mathbf{H}^{\theta}\)}$)$ on the drift coefficients of the DDSDE. Using the Krylov-type estimate from Section \ref{SEC:02} and employing standard tightness arguments, we prove the existence of weak solution to the DDSDE.

Throughout this paper, we use the following conventions and notations. $\langle\cdot,\cdot\rangle$ denote the inner product among $\R^d$. Define \( \mathbb{R}_{+}:=[0, \infty) \). The letter \( C=C(\cdots) \) denotes an unimportant constant, whose value may change in different places. We also use  \( A \lesssim B \) to denote \( A \leqslant c B \), for some unimportant constant \( c > 0 \).

\section{Preliminary, Assumptions on \levy~process and Krylov Type Estimate}\label{SEC:02}

\subsection{H\"older, fractional Sobolev, and Bessel-potential space}
For every $p \in[1, \infty)$, we denote by $L^{p}$ the space of all $p$-order integrable functions on $\mathbb{R}^{d}$ with the norm denoted by $\|\cdot\|_{p}$. For $p=\infty$, we set
\begin{equation*}
    \|f\|_{\infty}:=\sup _{x \in \mathbb{R}^{d}}|f(x)|.
\end{equation*}
 For $s>0$, let $\mathbf{C}^{s}(\mathbb{R}^{d})$ be the classical $s$-order Hölder space consisting of all measurable functions $f: \mathbb{R}^{d} \rightarrow \mathbb{R}$ with
\begin{equation*}
\|f\|_{\mathbf{C}^{s}}:=\sum_{j=0}^{\lfloor s \rfloor}\left\|\nabla^{j} f\right\|_{\infty}+\left[\nabla^{\lfloor s \rfloor} f\right]_{\mathbf{C}^{s-\lfloor s \rfloor}}<\infty,
\end{equation*}
where $\lfloor s \rfloor$ denotes the greatest integer not more than $s$, and
\begin{equation*}
    [f]_{\mathbf{C}^{\gamma}}:=\sup _{x \in \mathbb{R}^{d}} \frac{\|f(\cdot+x)-f(\cdot)\|_{\infty}}{|x|^{\gamma}},\quad  \gamma \in(0,1).
\end{equation*}
So, by convention, we have $\|\cdot\|_{\mathbf{C}^{0}}=\|\cdot\|_{\infty}.$

For $\beta \geq 0$ and $p \geq 1$, let $\mathbf{H}^{\beta,p}:=(I-\Delta)^{-\beta / 2}(L^{p}(\mathbb{R}^{d}))$ be the Bessel potential space with the norm
\begin{equation*}
    \|f\|_{\beta, p}:=\left\|(I-\Delta)^{\beta / 2} f\right\|_{p}.
\end{equation*}

Notice that for $\beta=m \in \mathbb{N}$, an equivalent norm of $\mathbf{H}^{\beta,p}$ is given by (cf. \cite{Triebel1978}, p. 177)
\begin{equation*}
    \|f\|_{m, p}=\sum_{i=0}^{m}\left\|\nabla^{i} f\right\|_{p}.
\end{equation*}

By Sobolev's embedding theorem, if $\beta-\frac{d}{p}>0$ is not an integer, then (cf. \cite{Triebel1978}, p. 206, (16))
\begin{equation*}
    \mathbf{H}^{\beta,p} \hookrightarrow \bC^{\beta-d / p}(\mathbb{R}^{d}).
\end{equation*}
Let $A$ and $B$ be an interpolation pair of Banach spaces. For $\theta \in[0,1]$, we use $[A, B]_{\theta}$ to denote the complex interpolation space between $A$ and $B$ (cf. \cite{Triebel1978}). We have the following reiteration relation (cf. \cite[p. 185, (11)]{Triebel1978}): for $p>1$, $\beta_{1} \neq \beta_{2}$ and $\theta \in(0,1)$,
\begin{equation*}
    \left[\mathbf{H}^{\beta_{1},p}, \mathbf{H}^{\beta_{2},p}\right]_{\theta}=\mathbf{H}^{\beta_{1}+\theta\left(\beta_{2}-\beta_{1}\right),p}.
\end{equation*}

On the other hand, for $0<\beta \neq$ integer, the fractional Sobolev space $\mathbf{W}^{\beta,p}$ is defined by the completion of $\bC_{c}^{\infty}$ under the following norm $\|f\|_{\beta, p}^{\sim}$ (cf. \cite{Triebel1978}, p. 190, (15)) 
\begin{equation*}
    \|f\|_{\beta, p}^{\sim}:=\|f\|_{p}+\sum_{k=0}^{[\beta]}\left(\int_{\mathbb{R}^{d}} \int_{\mathbb{R}^{d}} \frac{\left|\nabla^{k} f(x)-\nabla^{k} f(y)\right|^{p}}{|x-y|^{d+(\beta-[\beta]) p}} \mathrm{d} x \mathrm{d} y\right)^{1 / p}<+\infty
\end{equation*}

For $\beta=0,1,2, \ldots$, we also set $\mathbf{W}^{\beta,p}:=\mathbf{H}^{\beta,p}$. The relation between $\mathbf{H}^{\beta,p}$ and $\mathbf{W}^{\beta,p}$ is given as follows (cf. \cite{Triebel1978}, p. 180, (9)): for any $\beta>0, \varepsilon \in(0, \beta)$ and $p>1$,
\begin{equation*}
    \mathbf{H}^{\beta+\varepsilon,p} \hookrightarrow \mathbf{W}^{\beta,p} \hookrightarrow \mathbf{H}^{\beta-\varepsilon,p} .
\end{equation*}

\subsection{Assumptions on \levy~process}

A càdlàg process \((L_{t})_{t\geq 0 }\) on \( \mathbb{R}^{d}~(d \geqslant 1) \) is called a Lévy process, if \( L_{0}=0 \) almost surely and \( L \) has independent and identically distributed increments. Let $\mathbb{R}^{d}_{*}:=\mathbb{R}^{d}\setminus\{0\}$. The associated Poisson random measure of $(L_t)$ is defined by
\[
N((0, t] \times \Gamma):=\sum_{s \in(0, t]} \mathbbm{1}_{\Gamma}\left(L_{s}-L_{s-}\right), \quad \forall \Gamma \in \mathscr{B}(\mathbb{R}^{d}_{*}), t>0,
\]
and the Lévy measure  is given by
\[
\nu(\Gamma):=\mathbb{E} N((0,1] \times \Gamma) .
\]
Then, the compensated Poisson random measure is defined by
\[
\tilde{N}(\mathrm{d} r, \mathrm{d} z):=N(\mathrm{d} r, \mathrm{d} z)-\nu(\mathrm{d} z) \mathrm{d} r.
\]

Fix a probability space \( (\Omega, \mathcal{F}, \mathbb{P}) \) and on it a \( d \)-dimensional Lévy process \( L \), equipped with a right-continuous, complete filtration \( \mathbb{F}=\left(\mathcal{F}_{t}\right)_{t \geq 0} \). 
The Markov transition semigroup associated with the process \( L \) is denoted by \( \mathcal{P}=\left({P}_{t}\right)_{t \geq 0} \), and the generator of \( \mathcal{P} \) is denoted by \(\mathcal{L} \). To be concrete, 
\begin{equation}\label{EQ:SEMIGROUP:01}
	{P}_{t} f(x)=\mathbb{E}\left(f\left(L_{t}+x\right)\right).
\end{equation}
Then, suppose $\alpha\in (1,2] $, we propose assumptions (\hypertarget{(A)}{$\mathbf{A}$}) about the driving \levy~process $ L$:

\noindent \hypertarget{(A1)}{(A1)} (gradient type bound on the semigroup) For any $p\geq 1,$ there exists a constant $M$ such that for any $f\in \mathbf{C}_0^{\infty}(\R^d),$
\[
\|\nabla P_t f\|_{p}\leq Mt^{-\frac{1}{\alpha}}\|f\|_{p}.
\]



\noindent \hypertarget{(A2)}{(A2)} (moment conditions) For any $T>0,$ there exists a constant \( M=M(\alpha,T) \) such that
\[
\mathbb{E}\left[\left|L_{t}\right| \right] \leq M t^{\frac{1}{\alpha}}, \quad 0<t\leq T.
\]

\noindent \hypertarget{(A3)}{(A3)} (smooth density) For any \( t>0, \mu_{t}(\d x):=\mathscr{L}_{L_t}(\d x)=\P\circ L_t^{-1}(\d x) \) has a smooth density \( p_{t}(x) \) with respect to the Lebesgue measure $\d x$, which is given by
\[
p_{t}(x)=\frac{1}{(2 \pi)^{d}} \int_{\mathbb{R}^{d}} \mathrm{e}^{-\mathrm{i}\langle x, \xi\rangle} \mathrm{e}^{-t \Phi(\xi)} \mathrm{d} \xi,
\]
where $\Phi(\xi)$ is the \levy~symbol corresponding $L$.

\noindent \hypertarget{(A4)}{(A4)} (path decomposition) Let $\R^d_{*}:=\R^d\setminus\{0\}.$
By  \levy-\ito's decomposition, for any $t\geq 0, L_t $ can be reformulated as following
\begin{equation}\label{EQ:0722:01}
	L_t= \sigma W_t+\int_{0}^{t}\int_{0<|x|\leq 1}x\tilde{N}(\d s,\d x)+\int_{0}^{t}\int_{|x|>1}x{N}(\d s,\d x):=\sigma W_t+Z_t, \quad \sigma\in \R^d\otimes\R^d, 
\end{equation}
where $ (W_t) $ is a $ d $-dimensional standard Brownian motion, $\sigma\sigma^{\top}=:A$ is either positive-definite matrix or $0$, and $ {N}(\d s,\d x),\tilde{N}(\d s,\d x)$ are the Poisson random measure and the compensated Poisson random measure on $ \mathscr{B}([0,T])\times \mathscr{B}(\R_{*}^d) $ with respect to a pure-discontinuous process $(Z_t)$, respectively.

\noindent \hypertarget{(A5)}{(A5)} (the non-degenerate condition) The support of \levy~measure $\nu$ is not contained in a proper linear subspace of $\R^d$ when $Z\not\equiv 0$ in \eqref{EQ:0722:01}.

\begin{remark}
    If follows from \cite[Proposition 24.17]{SATO2013} that $($\hyperlink{(A5)}{$\mathrm{A}5$}$)$ is equivalent to the distribution of $L_t$ is nondegenerate.
\end{remark}

\begin{remark}
 By the L\'{e}vy--Khintchine formula, if condition $($\hyperlink{(A4)}{$\mathrm{A}4$}$)$ holds for the process $ L $, then its generating triplet is $(A, \nu, 0)$, where $\nu$ denotes the L\'{e}vy measure of $ L $. Furthermore, the characteristic function of $ L_t $ satisfies
\begin{equation*}
\mathbb{E}\left[\,\mathrm{e}^{\mathrm{i}\langle\xi, L_t\rangle}\,\right] = \mathrm{e}^{-t \Phi(\xi)},
\end{equation*}
where the L\'{e}vy symbol $\Phi(\xi)$ is given by
\begin{equation}
    \Phi(\xi) = -\frac{1}{2}\langle\xi, A\xi\rangle + \int_{\mathbb{R}^d} \left( \mathrm{e}^{\mathrm{i}\langle\xi, y\rangle} - 1 - \mathrm{i}\langle\xi, y\rangle \mathbbm{1}_{\{|y| \leq 1\}} \right) \nu(\mathrm{d}y).
\end{equation}
Note that when $($\hyperlink{(A3)}{$\mathrm{A}3$}$)$  holds, the semigroup $(P_t)_{t\geq 0}$ can be expressed by the convolution, i.e.,
\begin{equation}\label{EQ:SEMIGROUP:02}
	{P}_{t} f(x)=\int_{\mathbb{R}^{d}} p_{t}(z) f(x-z) \mathrm{d} z=\int_{\mathbb{R}^{d}} p_{t}(x-z) f(z) \mathrm{d}z=p_t*f(x).
\end{equation}
\end{remark}

%

\begin{remark}\label{RMK:1023:01}

From $($\hyperlink{(A1)}{$\mathrm{A}1$}$)$ , we can deduce the following estimate:
\[
    \|\nabla^k P_t f\|_{p} \lesssim   t^{-\frac{k}{\alpha}} \|f\|_{p}.
\]
We will proceed by induction on \( k \). For \( k = 1 \), the result holds directly by assumption.  Assume that the inequality holds for \( k-1 \), that is,
\[
    \|\nabla^{k-1} P_t f\|_{p} \lesssim  t^{-\frac{k-1}{\alpha}} \|f\|_{p}.
\]
We now show that the result holds for \( k \). Using the semigroup property of \( (P_t) \), we have
\begin{equation} \label{EQ:1023:01}
    \begin{split}
        \|\nabla^k P_t f\|_{p} &= \|\nabla \nabla^{k-1} P_t f\|_{p} = \|\nabla \nabla^{k-1} P_{\frac{t}{2}} (P_{\frac{t}{2}} f)\|_{p} \\
        &= \|\nabla^{k-1} P_{\frac{t}{2}} (\nabla P_{\frac{t}{2}} f)\|_{p} \lesssim  t^{-\frac{k-1}{\alpha}} \|\nabla P_{\frac{t}{2}} f\|_{p} \\
        &\lesssim  t^{-\frac{k-1}{\alpha}} t^{-\frac{1}{\alpha}} \|f\|_{p} =  t^{-\frac{k}{\alpha}} \|f\|_{p}.
    \end{split}
\end{equation}
Thus, the inequality holds for all \( k \geq 1 \).

\end{remark}

\begin{example}
	We can prove,   there are many kinds of process   satisfy  the above assumptions $($\hyperlink{(A1)}{$\mathrm{A}1$}$)$-$($\hyperlink{(A5)}{$\mathrm{A}5$}$)$, for example,
	\begin{enumerate}
	\item General non-degenerate $ \alpha $-stable process. Take
\begin{equation}\label{EQ:0515:01}
    \nu(D)=\int_{0}^{\infty} \int_{\mathbb{S}} r^{-1-\alpha} \mathbbm{1}_{D}(r \xi) \mu(\d \xi) \d r,  \quad D \in \mathscr{B}(\mathbb{R}^{d}),
\end{equation}
where $\mu$ is a finite non-negative measure concentrated on the unit sphere $\mathbb{S}:=\{y \in$ $\left.\mathbb{R}^{d}:|y|=1\right\}$ that is non-degenerate. 
	\item Standard isotropic d-dimensional $ \alpha $-stable process, $ \alpha \in  (1, 2) $. Let $\mu$ in \eqref{EQ:0515:01} be the uniform measure on $\mathbb{S}$. It follows that 
\begin{equation*}
    \nu(D)=c_{\alpha}\int_{D}\frac{1}{|y|^{d+\alpha}}\d y,\quad D\in \mathscr{B}(\R^d).
\end{equation*}
	\item Cylindrical $ \alpha $-stable process, $ \alpha \in  (1, 2) $, that is, take $\mu$ in \eqref{EQ:0515:01} being $\sum_{i=1}^{d}\delta_{e_i}+\delta_{-e_i}$, where $\{e_i\}_{1\leq i\leq d}$ is the standard basis in $\R^d$.
	\item $ \alpha $-stable type process, $ \alpha \in  (1, 2) $. Take 
\begin{equation}\label{EQ:0515:02}
    \nu(D)=\int_{0}^{\infty} \int_{\mathbb{S}} r^{-1-\alpha} \rho(r) \mathbbm{1}_{D}(r \xi) \mu(
\d \xi) \d r, \quad  \forall D \in \mathscr{B}(\mathbb{R}^{d}),
\end{equation}
where $\mu$ is a symmetric, non-degenerate finite and non-negative measure concentrated on $\mathbb{S}$ and $\rho:(0, \infty) \rightarrow \mathbb{R}_{+}$ is a measurable function such that for some constants $C, C_{1}, C_{2}>0$ one has
\begin{equation}\label{EQ:0515:05}
    \mathbbm{1}_{[0, C]}(r) \leq C_{1} \rho(r) \leq C_{2}.
\end{equation}
We can see that \eqref{EQ:0515:01} is the special case of \eqref{EQ:0515:02} with $\rho(r)\equiv 1$.
	\item $ \alpha $-stable tempered process, $ \alpha \in  (1, 2) $. Take $\rho(r)$ in \eqref{EQ:0515:02} as $\exp(-cr)$ for some constant $c > 0$.
	\item Truncate $ \alpha $-stable process, $ \alpha \in  (1, 2) $. Take $\rho(r)$ in \eqref{EQ:0515:02} as $c\mathbb \1_{[0,1]}(r)$ with $c>0$, and let $\mu$ be the uniform measure on $\mathbb{S}$. It yields that
    \begin{equation*}
        \nu(D)=c_{\alpha} \int_{D \cap\{y:|y| \leq 1\}}|y|^{-d-\alpha} \d y, \quad \forall D \in \mathscr{B}(\mathbb{R}^{d}).
    \end{equation*}
	\item Brownian motion;
\item The linear combination of two independent \levy~processes, i.e., $ L^{(1)} ,L^{(2)}$ are independent  \levy~processes that satisfy $($\hyperlink{(A1)}{$\mathrm{A}1$}$)$-$($\hyperlink{(A5)}{$\mathrm{A}5$}$)$ with $\alpha=\alpha_1,\alpha_2$ respectively, then so does $ L^{(1)}+L^{(2)}$ with $\alpha=\alpha_1\vee \alpha_2$. In particular, $L=L^{(1)}+L^{(2)}$  can be the sum of a $ d $–dimensional Brownian motion and the standard $ \alpha $–stable process.
\end{enumerate}
\end{example}
\begin{proof}
   For (1)-(6), we only need to prove (4) essentially, as (1)-(3), (5), (6) are the special cases of (4). Then, by the fact that $\mu$ is symmetric, one has the symbol of $L$ can be reformulated as
   \begin{equation*}
       \Phi(\lambda)=\int_{0}^{\infty} \int_{\mathbb{S}}(1-\cos (r\langle\lambda, \xi\rangle) r^{-1-\alpha} \rho(r) \mu(\d \xi) \d r\geq 0,\quad \forall \lambda \in \R^d.
   \end{equation*}
   It is easy to see that non-degeneracy of $\mu$ implies that $\int_{\mathbb{S}}|\langle\lambda, \xi\rangle| \mu(\d \xi)>0$ for any $\lambda \in \mathbb{S}$ and thus
\[
\inf _{\lambda \in \mathbb{S}} \int_{\mathbb{S}}|\langle\lambda, \xi\rangle| \mu(\d \xi)>0.
\]
Denoting $\bar{\lambda}:=\frac{\lambda}{|\lambda|} $ and applying \eqref{EQ:0515:05}, we get for any  $\lambda \in \mathbb{R}^{d} $  with $|\lambda|>1 / C$,
\begin{equation}\label{EQ:0515:06}
    \begin{split}
        \Phi(\lambda)&  \geq \int_{\mathbb{S}} \int_{0}^{C}(1-\cos (r\langle\lambda, \xi\rangle) r^{-1-\alpha} \d r \mu(\d \xi) \\
&=|\lambda|^{\alpha} \int_{\mathbb{S}}|\langle\bar{\lambda}, \xi\rangle|^{\alpha} \int_{0}^{C|\langle\lambda, \xi\rangle|}(1-\cos r) r^{-1-\alpha} \d r \mu(\d \xi) \\
&\geq \frac{1}{10}|\lambda|^{\alpha} \int_{\mathbb{S}}|\langle\bar{\lambda}, \xi\rangle|^{\alpha} \int_{0}^{|\langle\bar{\lambda}, \xi\rangle|} r^{1-\alpha} \d r \mu(\d \xi) \\
&\geq N|\lambda|^{\alpha} \int_{\mathbb{S}}|\langle\bar{\lambda}, \xi\rangle|^{2} \mu(\d \xi)  \geq N|\lambda|^{\alpha}\left(\int_{\mathbb{S}}|\langle\bar{\lambda}, \xi\rangle| \mu(\d \xi)\right)^{2}\\
&\geq N|\lambda|^{\alpha}.
    \end{split}
\end{equation}
Moreover, for any $n\in \N,$ by \eqref{EQ:0515:06},
\begin{align*}
        \int_{\R^d} |\E\exp(i\langle\lambda ,L_t\rangle)||\lambda|^n\d \lambda &= \int_{\R^d}\e^{-t\Phi(\lambda)}|\lambda|^n \d \lambda
        \\
        & = \left(\int_{|\lambda|>1/C}+\int_{|\lambda|\leq 1/C}\right)\e^{-t\Phi(\lambda)}|\lambda|^n \d \lambda\\
        &\leq \int_{|\lambda|>1/C}\e^{-N|\lambda|^{\alpha}}|\lambda|^{n}\d \lambda+ \int_{|\lambda|\leq 1/C} |\lambda|^n\d \lambda<\infty.
\end{align*}
Then by \cite[Proposition 28.1]{SATO2013}, for any $t>0$, the distribution of $L_t$ has a smooth density.

By Minkowski's inequality and the gradient estimate of heat kernel (see, e.g., \cite[(1.2)]{DuZhang2019}), one has 
\begin{equation*}
    \begin{split}
        \|\nabla P_t f\|_{p} & = \left\|\int_{\R^d}\nabla p(t,y)f(x-y)\d y\right\|_{p}\\
        &\lesssim \left\|\int_{\R^d}t^{1-1/\alpha}(t^{1/\alpha}+|y|)^{-d-\alpha}f(x-y)\d y\right\|_{p}\\
        &\lesssim \|f\|_{p}\int_{\R^d}t^{1-1/\alpha}(t^{1/\alpha}+|y|)^{-d-\alpha}\d y\\
        &=\|f\|_{p} t^{1-1/\alpha} \frac{2 \pi^{d / 2}}{\Gamma(d / 2)}  \frac{\Gamma(d) \Gamma(\alpha)}{\Gamma(d+\alpha)}  t^{-1}\lesssim t^{-1/\alpha}\|f\|_{p}.
    \end{split}
\end{equation*}

   For (7), i.e., when $L$ is a Brownian motion, the proof is obvious.

   For (8),    when $L=L^{(1)}+L^{(2)}$, $ L^1 ,L^2$ are independent  \levy~processes satisfy $($\hyperlink{(A)}{$\mathbf{A}$}$)$  with index   $\alpha_1,~\alpha_2$, respectively, then 
    \begin{equation*}
        P_tf(x)=\E \left(P_t^{(2)}f(x+L_t^{(1)})\right),
    \end{equation*}
    it follows that
    \begin{equation*}
\begin{split}
            \|\nabla P_{t}f\|_{p}&=\|\E [P_t^{(2)}f(\cdot+L_t^{(1)})]\|_{p} \leq \E[\| \nabla P_t^{(2)}f(\cdot+L_t^{(1)})\|_{p}]\\
        & \lesssim t^{-\frac{1}{\alpha_2}}\|f\|_{p}. 
\end{split}
    \end{equation*}
    In same way,
    \begin{equation*}
         \|\nabla P_{t}f\|_{p} \lesssim t^{-\frac{1}{\alpha_1}}\|f\|_{p}, 
    \end{equation*}
    which yields that $ \|\nabla P_{t}f\|_{p} \lesssim  t^{-\frac{1}{\alpha_1\vee \alpha_2}}\|f\|_{p}$.

    If $L^{(1)}_{t}$ and $L^{(2)}_{t}$  have density functions  $p_t^{(\alpha_1)}(x)$ and $p_t^{(\alpha_2)}(x)$ respectively, then according to Fubini’s theorem, it is easy to see that $L^{(1)}_{t}+L^{(2)}_{t}$ has a density function
    \begin{equation*}
        p_t(x) = p_t^{(\alpha_1)}*p_t^{(\alpha_2)}(x),
    \end{equation*}
    which is smooth function.
\end{proof}

\subsection{Krylov type estimate}


In the following, we always assume that the driving noise  $L$ satisfies Assumption $($\hyperlink{(A)}{$\mathbf{A}$}$)$.

Based on $($\hyperlink{(A1)}{$\mathrm{A}1$}$)$-$($\hyperlink{(A5)}{$\mathrm{A}5$}$)$, we now formulate some consequences about heat kernel and related bounds of the process $ ( L_t ) $.
In this paper, we will treat with  the following SDE and DDSDE,
\begin{equation}\label{EQ:SDE:101}
	\d X_t= b(t,X_t)\d t+\d L_t,
\end{equation}
and 
\begin{equation}\label{EQ:MVSDE:101}
	\d X_t= b(t,X_t,\mathscr{L}_{X_t})\d t+\d L_t.
\end{equation}

We recall the following complex interpolation theorem (cf. \cite[p. 59, Theorem (a)]{Triebel1978}).
\begin{theorem}\label{THM:INTER:01}
    Let \( A_{i} \subset B_{i}, i=0,1 \), be Banach spaces. Let \( \mathcal{T}: A_{i} \rightarrow B_{i}, i=0,1 \), be bounded linear operators. For \( \theta \in[0,1] \), we have
\[
\|\mathcal{T}\|_{A_{\theta} \rightarrow B_{\theta}} \leq\|\mathcal{T}\|_{A_{0} \rightarrow B_{0}}^{1-\theta}\|\mathcal{T}\|_{A_{1} \rightarrow B_{1}}^{\theta},
\]
where \( A_{\theta}:=\left[A_{0}, A_{1}\right]_{\theta}, B_{\theta}:=\left[B_{0}, B_{1}\right]_{\theta} \) and \( \|\mathcal{T}\|_{A_{\theta} \rightarrow B_{\theta}} \) denotes the operator norm of \( \mathcal{T} \) mapping \( A_{\theta} \) to \( B_{\theta} \).
\end{theorem}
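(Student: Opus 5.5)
The plan is to prove this directly from Calderón's definition of the complex method. By the usual convention, \( \mathcal{T} \) is a single linear operator on \( A_0+A_1 \) whose restrictions are the two bounded maps. I will transfer an almost-optimal analytic representative of an element of \( A_\theta \) through \( \mathcal{T} \), rescaled by an exponential factor.

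\emph{Setup.} For a compatible couple \( (A_0,A_1) \), let \( S=\{z\in\mathbb{C}:0<\operatorname{Re} z<1\} \). Let \( \mathcal{F}(A_0,A_1) \) be the space of functions \( f:\bar S\to A_0+A_1 \) such that:
\begin{enumerate}
\item \( f \) is bounded and continuous on \( \bar S \) and analytic in \( S \);
\item \( t\mapsto f(j+\mathrm{i}t) \) is continuous and bounded into \( A_j \), and tends to \( 0 \) as \( |t|\to\infty \), for \( j=0,1 \).
\end{enumerate}
It carries the norm \( \|f\|_{\mathcal{F}}=\max_{j}\sup_t\|f(j+\mathrm{i}t)\|_{A_j} \). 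Then \( A_\theta=\{f(\theta):f\in\mathcal{F}\} \) with the quotient norm \( \|a\|_{A_\theta}=\inf\{\|f\|_{\mathcal{F}}:f(\theta)=a\} \), and similarly for \( B_\theta \). Set \( M_j:=\|\mathcal{T}\|_{A_j\to B_j} \). If some \( M_j=0 \), replace it by \( M_j+\delta \) and let \( \delta\downarrow0 \) at the end, so we may assume \( M_0,M_1>0 \).

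\emph{Main step.} Fix \( a\in A_\theta \) and \( \varepsilon>0 \), and choose \( f\in\mathcal{F}(A_0,A_1) \) with \( f(\theta)=a \) and \( \|f\|_{\mathcal{F}}\le\|a\|_{A_\theta}+\varepsilon \). Define
\[
g(z):=M_0^{-(1-z)}M_1^{-z}\,\mathcal{T}f(z),\qquad z\in\bar S .
\]
Since \( |M_0^{-(1-z)}M_1^{-z}|=M_0^{-(1-\operatorname{Re}z)}M_1^{-\operatorname{Re}z} \), on the line \( \operatorname{Re}z=0 \) we get
\[
\|g(\mathrm{i}t)\|_{B_0}\le M_0^{-1}M_0\|f(\mathrm{i}t)\|_{A_0}=\|f(\mathrm{i}t)\|_{A_0},
\]
and likewise \( \|g(1+\mathrm{i}t)\|_{B_1}\le\|f(1+\mathrm{i}t)\|_{A_1} \). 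Hence \( \|g\|_{\mathcal{F}(B_0,B_1)}\le\|f\|_{\mathcal{F}} \). Moreover \( g(\theta)=M_0^{-(1-\theta)}M_1^{-\theta}\mathcal{T}a \), so
\[
\|\mathcal{T}a\|_{B_\theta}\le M_0^{1-\theta}M_1^{\theta}\bigl(\|a\|_{A_\theta}+\varepsilon\bigr).
\]
Letting \( \varepsilon\to0 \) gives the claimed bound.

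\emph{Expected obstacle.} The delicate point is checking that \( g \) really belongs to \( \mathcal{F}(B_0,B_1) \).
\begin{itemize}
\item For analyticity and continuity of \( \mathcal{T}f \) into \( B_0+B_1 \), I use that \( \mathcal{T} \) is bounded from \( A_0+A_1 \) to \( B_0+B_1 \), with norm at most \( \max(M_0,M_1) \) by definition of the sum norm. A bounded linear map preserves analyticity, since difference quotients pass through it.
\item The boundary continuity and decay follow from the boundedness of \( \mathcal{T}:A_j\to B_j \) together with boundedness of the scalar factor on \( \bar S \).
\end{itemize}
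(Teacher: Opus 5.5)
Your proof is correct: it is the standard Calder\'on argument, in which an almost-optimal representative $f$ is transferred through $\mathcal{T}$ after multiplication by the entire factor $M_0^{z-1}M_1^{-z}$ (of modulus $M_0^{-1}$ on $\operatorname{Re}z=0$ and $M_1^{-1}$ on $\operatorname{Re}z=1$), and your verification that $g\in\mathcal{F}(B_0,B_1)$, the $\delta$-perturbation when some $M_j=0$, and the endpoint cases $\theta\in\{0,1\}$ all go through. The paper gives no proof of its own and simply quotes Triebel \cite[p.~59, Theorem (a)]{Triebel1978}, where the result is obtained by essentially this same argument. You also correctly read the odd hypothesis ``$A_i\subset B_i$'' as the intended assumption that $(A_0,A_1)$ and $(B_0,B_1)$ are compatible couples and that $\mathcal{T}$ is a single linear map on $A_0+A_1$.
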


Let \( f \) be a locally integrable function on \( \mathbb{R}^{d} \). The Hardy-Littlewood maximal function is defined by
\[
\mathcal{M} f(x):=\sup _{0<r<\infty} \frac{1}{\left|B_{r}\right|} \int_{B_{r}}|f(x+y)| \mathrm{d} y
\]
where \( B_{r}:=\left\{x \in \mathbb{R}^{d}:|x|<r\right\} \). The following well-known results can be found in \cite[p. 5, Theorem 1]{Stein1970},
\begin{lemma}
    (i) For \( f \in \mathbf{W}^{1,1} \), there exists a constant \( C_{d}>0 \) and a Lebesgue measure zero set \( E \) such that for all \( x, y \notin E \),
\begin{equation}\label{EQ:0406:01}
    |f(x)-f(y)| \leq C_{d}|x-y|(\mathcal{M}|\nabla f|(x)+\mathcal{M}|\nabla f|(y)).
\end{equation}
(ii) For \( p>1 \), there exists a constant \( C_{d, p}>0 \) such that for all \( f \in L^{p}(\mathbb{R}^{d}) \),
\begin{equation}\label{EQ:0406:02}
    \|\mathcal{M} f\|_{p} \leq C_{d, p}\|f\|_{p}.
\end{equation}
\end{lemma}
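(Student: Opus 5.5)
Both parts are classical, and the plan is to follow the standard real-variable route: first prove (ii) (the Hardy--Littlewood maximal theorem), then prove (i) for smooth functions and pass to the limit, using the weak-type bound for $\mathcal{M}$ to control the exceptional set.

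\emph{Step 1: weak type $(1,1)$ and proof of (ii).} For $f\in L^1$ and $\lambda>0$, every point of $\{\mathcal{M}f>\lambda\}$ is the centre of a ball $B$ with $\int_B|f|>\lambda|B|$. Take a compact $K\subset\{\mathcal{M}f>\lambda\}$ and cover it by finitely many such balls. The Vitali covering lemma extracts a disjoint subfamily $B_1,\dots,B_m$ whose triples still cover $K$. Hence
\[
|K|\le 3^d\sum_i|B_i|\le 3^d\lambda^{-1}\|f\|_1 ,
\]
so $|\{\mathcal{M}f>\lambda\}|\le 3^d\lambda^{-1}\|f\|_1$. The bound $\|\mathcal{M}f\|_\infty\le\|f\|_\infty$ is immediate. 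Since $\mathcal{M}$ is sublinear, the Marcinkiewicz interpolation theorem gives \eqref{EQ:0406:02} for every $p\in(1,\infty)$. The covering lemma is the only genuinely geometric input.

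\emph{Step 2: (i) for smooth $f$.} Let $f\in \mathbf{C}_c^\infty$ and $x\neq y$, and set $r=|x-y|$ and $U=B_r(x)\cap B_r(y)$. Then $|U|\ge c_d r^d$, and we write
\[
|f(x)-f(y)|\le |f(x)-f_U|+|f(y)-f_U|, \qquad f_U:=\frac{1}{|U|}\int_U f .
\]
For the first term we use $U\subset B_r(x)$ and $f(z)-f(x)=\int_0^1\langle\nabla f(x+t(z-x)),z-x\rangle\,\d t$. This gives
\[
|f(x)-f_U|\le \frac{C r}{r^d}\int_0^1\int_{B_r(x)}|\nabla f(x+t(z-x))|\,\d z\,\d t .
\]
Substituting $w=x+t(z-x)$ turns the inner integral into $t^{-d}\int_{B_{tr}(x)}|\nabla f|\le |B_1|r^d\,\mathcal{M}|\nabla f|(x)$. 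So the first term is bounded by $C_d r\,\mathcal{M}|\nabla f|(x)$, and symmetrically the second by $C_d r\,\mathcal{M}|\nabla f|(y)$. This proves \eqref{EQ:0406:01} for smooth $f$, with no exceptional set.

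\emph{Step 3: approximation, the main obstacle.} For general $f\in\mathbf{W}^{1,1}$, pick $f_n\in\mathbf{C}_c^\infty$ with $f_n\to f$ in $\mathbf{W}^{1,1}$. Passing to a subsequence, $f_n\to f$ off a null set $E_1$. Only $\nabla f\in L^1$ is available here, so we cannot use (ii) and must use the weak-type estimate of Step 1. Sublinearity gives
\[
\big|\mathcal{M}|\nabla f_n|-\mathcal{M}|\nabla f|\big|\le \mathcal{M}|\nabla f_n-\nabla f| ,
\]
and by the weak $(1,1)$ bound the right-hand side tends to $0$ in measure. A further subsequence therefore converges a.e., off a null set $E_2$. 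Setting $E=E_1\cup E_2$ and letting $n\to\infty$ in the smooth inequality for $x,y\notin E$ yields \eqref{EQ:0406:01}; both sides are allowed to be $+\infty$. The care needed is exactly in this step: convergence of $\mathcal{M}|\nabla f_n|$ is only in measure, and the exceptional set must be chosen independently of the pair $(x,y)$, which the construction above ensures.
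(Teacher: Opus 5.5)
Your proof is correct. It also goes beyond the paper, which gives no argument and only cites \cite[p.~5, Theorem~1]{Stein1970}.

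Your Step~1 is essentially Stein's own proof of (ii): a Vitali covering gives the weak $(1,1)$ bound, which is then interpolated with the trivial $L^\infty$ bound. The pointwise estimate (i), however, is not part of the cited theorem. Your Steps~2--3 supply it by the standard route. For smooth $f$ you average over $B_r(x)\cap B_r(y)$. You then use the weak $(1,1)$ bound to get a.e.\ convergence of $\mathcal{M}|\nabla f_n|$, with an exceptional set chosen independently of $(x,y)$. This reads $\mathbf{W}^{1,1}$, as you do, as the classical Sobolev space with $\nabla f\in L^1$ and $\mathbf{C}_c^\infty$ dense.

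The self-contained route also gives flexibility the citation does not. In the proof of Lemma~\ref{LEM:KEY:01}(ii), the paper applies \eqref{EQ:0406:01} to $f\in\mathbf{H}^{1,p}$ with $p>1$. Such $f$ need not lie in $\mathbf{W}^{1,1}$ (e.g.\ $f\notin L^1$), so the lemma as stated does not literally cover that use. Your Step~3 carries over verbatim to this case:
\begin{itemize}
\item approximate $f$ by $\mathbf{C}_c^\infty$ functions in $\mathbf{H}^{1,p}$, so that $f_n\to f$ and $\nabla f_n\to\nabla f$ in $L^p$;
\item use the strong bound (ii) for $\mathcal{M}|\nabla f_n-\nabla f|$ in place of the weak $(1,1)$ bound to extract an a.e.\ convergent subsequence.
\end{itemize}
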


To derive the Krylov-type estimate, which will be used in prove the weak existence of SDE \eqref{EQ:SDE:101}, we need the following lemma.
\begin{lemma}\label{LEM:KEY:01}
(i) For any \( \alpha \in(1,2],~p>1 \) and \( \beta, \gamma \geq 0 \), for all \( f \in \mathbf{H}^{\beta,p} \), we have
\begin{equation}\label{EQ:0620:01}
		\left\|{P}_{t} f\right\|_{\beta+\gamma, p} \leq C t^{-\gamma / \alpha}\|f\|_{\beta, p}.
\end{equation}
(ii) For any \( \alpha \in(1,2], \theta \in[0,1] \) and \( p>1 \), there exists a constant \( C=C(d, p, \theta)>0 \), such that for all \( f \in \mathbf{H}^{\theta,p} \), we have
\begin{equation}\label{EQ:0620:02}
		\left\|{P}_{t} f-f\right\|_{p} \leq C t^{\theta / \alpha}\|f\|_{\theta, p}.
\end{equation}
\end{lemma}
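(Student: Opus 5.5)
The plan is to get everything from the gradient bound (\hyperlink{(A1)}{A1}), upgraded to higher derivatives in Remark \ref{RMK:1023:01}, combined with complex interpolation (Theorem \ref{THM:INTER:01}) and the reiteration identity $[\mathbf{H}^{\beta_1,p},\mathbf{H}^{\beta_2,p}]_\theta=\mathbf{H}^{\beta_1+\theta(\beta_2-\beta_1),p}$. We may restrict to $t\in(0,T]$; the constant $C$ will depend on $T$ as well.

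For (i), note first that $P_t$ is a convolution (by \eqref{EQ:SEMIGROUP:02}), so it commutes with $(I-\Delta)^{\beta/2}$. This gives $\|P_tf\|_{\beta+\gamma,p}=\|P_t g\|_{\gamma,p}$ with $g=(I-\Delta)^{\beta/2}f$ and $\|g\|_p=\|f\|_{\beta,p}$. It therefore suffices to treat $\beta=0$ and prove $\|P_tg\|_{\gamma,p}\lesssim t^{-\gamma/\alpha}\|g\|_p$.
\begin{itemize}
\item For an integer $\gamma=m$, use the equivalent norm $\sum_{i\le m}\|\nabla^iP_tg\|_p$. Apply Remark \ref{RMK:1023:01} to each term; contractivity of $P_t$ on $L^p$ handles $i=0$. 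Since $t\le T$, the lower powers $t^{-i/\alpha}$ are dominated by $T^{(m-i)/\alpha}t^{-m/\alpha}$.
\item For non-integer $\gamma$, pick an integer $m>\gamma$ and set $\theta=\gamma/m$. Apply Theorem \ref{THM:INTER:01} to $\mathcal T=P_t$ with $A_0=A_1=L^p$, $B_0=L^p$ and $B_1=\mathbf{H}^{m,p}$. This yields $\|P_t\|_{L^p\to\mathbf{H}^{\gamma,p}}\le 1^{1-\theta}(Ct^{-m/\alpha})^{\theta}=Ct^{-\gamma/\alpha}$.
\end{itemize}
Density of $\mathbf C_0^\infty$ lets us extend (\hyperlink{(A1)}{A1}) to all of $L^p$.

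For (ii), I again interpolate, now for the operator $\mathcal T=P_t-I$ at the endpoints $\theta=0$ and $\theta=1$.
\begin{itemize}
\item At $\theta=0$: $\|P_tf-f\|_p\le2\|f\|_p$.
\item At $\theta=1$: for $f\in\mathbf{H}^{1,p}$, write $P_tf(x)-f(x)=\mathbb E[f(x+L_t)-f(x)]$. Then apply \eqref{EQ:0406:01} pointwise, which is valid since $L_t$ has a density by (\hyperlink{(A3)}{A3}), so the null set $E$ is avoided almost surely:
\[|P_tf(x)-f(x)|\le C_d\,\mathbb E\big[|L_t|\big(\mathcal M|\nabla f|(x+L_t)+\mathcal M|\nabla f|(x)\big)\big].\]
Take $L^p$ norms in $x$ with Minkowski's inequality. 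Translation invariance of the Lebesgue measure and \eqref{EQ:0406:02} then give $\|P_tf-f\|_p\le 2C_dC_{d,p}\,\mathbb E|L_t|\,\|\nabla f\|_p$. Finally, (\hyperlink{(A2)}{A2}) bounds this by $Ct^{1/\alpha}\|f\|_{1,p}$.
\end{itemize}
Interpolating between $L^p\to L^p$ with norm $2$ and $\mathbf{H}^{1,p}\to L^p$ with norm $Ct^{1/\alpha}$ gives $Ct^{\theta/\alpha}$ on $\mathbf{H}^{\theta,p}=[L^p,\mathbf{H}^{1,p}]_\theta$.

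The step I expect to need the most care is the $\theta=1$ endpoint in (ii). The pointwise inequality holds only off a null set, and I need the randomness $L_t$ to avoid it. For $t>0$ this is fine by absolute continuity. If the Brownian part $\sigma W$ were degenerate, one could alternatively argue via Fubini in $x$: for fixed $\omega$ the set of $x$ with $x$ or $x+L_t\in E$ is null. That route avoids (\hyperlink{(A3)}{A3}) altogether, and I would use it.
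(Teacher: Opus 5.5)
Your proof is correct and follows essentially the paper's route. Part (i) comes from the higher-order gradient bounds of Remark \ref{RMK:1023:01} combined with complex interpolation; part (ii) comes from the maximal-function inequalities \eqref{EQ:0406:01}--\eqref{EQ:0406:02} and the moment bound (A2) at $\theta=1$, interpolated against the trivial $L^p$ bound $\|P_tf-f\|_p\le 2\|f\|_p$. Two of your details go slightly beyond the paper. You reduce (i) to $\beta=0$ by commuting $P_t$ with $(I-\Delta)^{\beta/2}$, and you restrict explicitly to $t\le T$ with $C$ depending on $T$, which is needed for the lower-order terms and for (A2); both are small but sound refinements of points the paper leaves implicit.
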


\begin{proof}
	(i) Let $f\in \bC_0^{\infty}(\R^d).$ For any $k,m\in \N,$ 
	\[ 
	\nabla^{k+m} P_tf(x) = \nabla^{k+m} \int f(y)P_t(x,\d y) = \nabla^{k} \int \nabla^m f(y) P_t(x,\d y),
	\]
	 by assumption $($\hyperlink{(A1)}{$\mathrm{A}1$}$)$, Remark \ref{RMK:1023:01} and iteration, it follows that
	\[ 
	\|\nabla^{k+m}P_tf\|_p \leq Mt^{-\frac{k}{\alpha}} \|\nabla^m f\|_p.
	\]
	 Since \( \bC_0^{\infty}(\R^d) \) is dense in \( \mathbf{H}^{m,p}  \), for any \( f\in \mathbf{H}^{m,p} \), we also have
	\[ 
	\|\nabla^{k+m}P_tf\|_p \leq Mt^{-\frac{k}{\alpha}} \| f\|_{m,p}.
	\]
	 Furthermore, using the contraction property of the Markov semigroup, \( \|P_t f\|_p\leq \|f\|_p \), and applying the complex interpolation theorem, i.e., Theorem \ref{THM:INTER:01}, we obtain \eqref{EQ:0620:01}.
	
	(ii) First assume \( f \in \mathbf{H}^{1,p} \). According to \eqref{EQ:0406:01}, for Lebesgue almost every \( x \in \mathbb{R}^{d} \), we have
	\[
	\begin{aligned}
		\left|{P}_{t} f(x)-f(x)\right| & \leq \int_{\mathbb{R}^{d}}|f(x+y)-f(x)| \cdot p_{t}(y) \mathrm{d} y \\
		& \leq C \int_{\mathbb{R}^{d}}(\mathcal{M}|\nabla f|(x+y)+\mathcal{M}|\nabla f|(x)) \cdot|y| \cdot p_{t}(y) \mathrm{d} y,
	\end{aligned}
	\]
	 thus, using \eqref{EQ:0406:02} and moment condition (\hyperlink{(A2)}{A2}), we have
	\[
	\left\|{P}_{t} f-f\right\|_{p} \leq C\|\mathcal{M}|\nabla f|\|_{p} \int_{\mathbb{R}^{d}}|y| \cdot p_{t}(y) \mathrm{d} y \leq C\|\nabla f\|_{p} \mathbb{E}\left|L_{t}\right|\leq C t^{1 / \alpha}\|\nabla f\|_{p}.
	\]
	 Thus, the result follows again from Theorem \ref{THM:INTER:01}.
\end{proof}

To proceed, let $ (X_t)  $  be a generally semi-martingale (no matter whether continuous or not) with the form
\[ 
 X_t =X_0+\int_{0}^{t}\xi_s \d s+L_t,
 \]
 where $ X_{0} \in \mathcal{F}_{0}   $, $L_t:=\sigma W_t+\int_{0}^{t}\int_{|x|\leq 1}x\tilde{N}(\d s,\d x)+\int_{0}^{t}\int_{|x|> 1}x{N}(\d s,\d x)$,  $ \left(\xi_{t}\right)_{t \geq 0}  $  is a measurable and  $ (\mathcal{F}_{t}) $-adapted $ \R^d  $-valued process. Let $ u(t,x) $ be a bounded smooth function on $ \R^+\times\R^d $.
By \ito 's  formula we have
\[ 
\begin{aligned}
	u\left(t,X_{t}\right)&=  u\left(0,X_{0}\right)+\int_{0}^{t}\big[\left(\partial_{s} u(s,\cdot)+\mathcal{L} u(s,\cdot)\right)\left(X_{s}\right)+\langle \xi_s ,\nabla u(s,\cdot)\rangle (X_{s})\big] \mathrm{d} s \\
	&\quad  +\int_{0}^{t}\nabla u(s,X_s)\sigma\d B_s+\int_{0}^{t} \int_{\mathbb{R}^{d}_{*}}\big(u\left(s,X_{s-}+y\right)-u\left(s,X_{s-}\right)\big) \tilde{N}(\mathrm{d} s, \mathrm{d} y)  ,
\end{aligned}
 \]
where 
\[ 
\mathcal{L}u(t,x)= \frac{1}{2}\operatorname{tr}\left(\sigma\sigma^{\top}\nabla^2_{x} u(t,x)\right)+\int_{\R^d_{*}}\left(u(t,x+z)-u(t,x)-\1_{\{|z|\leq 1\}}\langle z,\nabla u(t,x)\rangle \right) \nu(\d z).
 \]
Recall that $\mathcal{L}$
 is actually the infinitesimal generator of the semigroup $\mathcal{P}=(P_t)_{t\geq 0}$. Let us first prove the following Krylov type estimate for the above \( (X_{t})\),  which will be used later to prove the existence of weak solutions for SDE \eqref{EQ:SDE:101} and DDSDE \eqref{EQ:MVSDE:101}  with singular drift coefficient $b$.
\begin{theorem}[Krylov type estimate]\label{THM:KRYLOV:01}
Suppose that \( \alpha \in(1,2]\), \( p>\frac{d}{\alpha-1} \) and \( q>\frac{p \alpha}{p(\alpha-1)-d} \). Then, for any \( T_{0}>0 \), there exist a constant \( C= \) \( C\left(T_{0}, d, \alpha, p, q\right)>0 \) such that for any \( \left(\mathcal{F}_{t}\right) \)-stopping time \( \tau \), and \( 0 \leq S \leq T \leq T_{0} \), and all \( f \in L^{q}([S, T] ; L^{p}(\mathbb{R}^{d})) \),
\begin{equation}\label{EQ:0715:01}
	\mathbb{E}\left(\int_{S \wedge \tau}^{T \wedge \tau} f_{S}\left(X_{S}\right) \mathrm{d} s \bigg| \mathcal{F}_{S}\right) \leq C\left[1+\mathbb{E}\left(\int_{S \wedge \tau}^{T \wedge \tau}\left|\xi_{s}\right| \mathrm{d} s \bigg| \mathcal{F}_{S}\right)\right]\|f\|_{L^{q}([S, T] ; L^{p}(\mathbb{R}^{d}))} .
\end{equation}
\end{theorem}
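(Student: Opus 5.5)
We follow the Krylov--Röckner/Zhang scheme: solve a backward Kolmogorov equation with a large damping parameter, apply Itô's formula to $X$, and control the drift term through a gradient bound. (In the statement, $f_S(X_S)$ should be read as $f_s(X_s)$.)

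\textbf{Step 1: reduction.} By density and monotone convergence it suffices to take $f\geq 0$ smooth with compact support in $[S,T]\times\R^d$, and to bound $\E\big(\int_{S\wedge\tau}^{T\wedge\tau} f_s(X_s)\,\d s\,\big|\,\mathcal F_S\big)$.

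\textbf{Step 2: the PDE.} For $\lambda\geq 0$, define via Duhamel's formula
\[
u(t,x):=\int_t^T \e^{-\lambda(s-t)}P_{s-t}f_s(x)\,\d s,\qquad t\in[S,T].
\]
Formally this solves $\partial_t u+\mathcal L u-\lambda u+f=0$ with $u(T)=0$.

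Fix $\beta=1+d/p+\varepsilon$. The hypothesis $p>d/(\alpha-1)$ gives $1+d/p<\alpha$, so $\beta<\alpha$ for small $\varepsilon$. By Lemma \ref{LEM:KEY:01}(i), $\|P_{s-t}f_s\|_{\beta,p}\leq C(s-t)^{-\beta/\alpha}\|f_s\|_p$. Apply Hölder in $s$ with exponent $q$ and conjugate $q'$. The resulting integral $\int_0^{T}\e^{-\lambda q' r}r^{-\beta q'/\alpha}\,\d r$ is finite exactly when $\beta q'/\alpha<1$. With $\beta\downarrow 1+d/p$ this reads
\[
\frac{1}{q}<1-\frac{1+d/p}{\alpha},
\]
which is precisely $q>\frac{p\alpha}{p(\alpha-1)-d}$.

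By the Sobolev embedding $\mathbf H^{\beta,p}\hookrightarrow\bC^{\beta-d/p}=\bC^{1+\varepsilon}$, we obtain
\[
\sup_t\big(\|u(t)\|_\infty+\|\nabla u(t)\|_\infty\big)\leq C\,\|f\|_{L^q([S,T];L^p)}.
\]
Keeping $\lambda$ in the integral, the constant can be made $\leq C\lambda^{-\delta}$ for some $\delta>0$. We do not actually need this decay, since the bound is linear in $\int|\xi|$. We also note $\|u\|_\infty\lesssim\|f\|$ using the $\gamma=d/p+\varepsilon$ case.

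\textbf{Step 3: Itô's formula.} Apply the Itô formula stated before the theorem to $u(t,X_t)$ between $S\wedge\tau$ and $T\wedge\tau$. Using the PDE to replace $\partial_t u+\mathcal L u$ by $\lambda u-f$ gives
\[
\int_{S\wedge\tau}^{T\wedge\tau} f_s(X_s)\,\d s
= u(S\wedge\tau,X_{S\wedge\tau})-u(T\wedge\tau,X_{T\wedge\tau})
+\int \big(\lambda u+\langle\xi_s,\nabla u\rangle\big)(s,X_s)\,\d s+\text{martingale terms}.
\]
Take $\lambda=0$ for simplicity; $\lambda$ is unnecessary here because the $\xi$ term is simply kept on the right-hand side. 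Since $u$ and $\nabla u$ are bounded, the Brownian and compensated-Poisson stochastic integrals are true martingales after localization. Taking conditional expectation given $\mathcal F_S$ therefore yields
\[
\E\Big(\int_{S\wedge\tau}^{T\wedge\tau} f_s(X_s)\,\d s\,\Big|\,\mathcal F_S\Big)
\leq 2\|u\|_\infty+\|\nabla u\|_\infty\,\E\Big(\int_{S\wedge\tau}^{T\wedge\tau}|\xi_s|\,\d s\,\Big|\,\mathcal F_S\Big),
\]
which combined with Step 2 is \eqref{EQ:0715:01}.

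\textbf{Main obstacle.} The delicate point is justifying Itô's formula for $u$. This $u$ is only $\bC^{1+\varepsilon}$ in $x$ and not smooth in $t$, while $\mathcal L u$ involves second derivatives and a nonlocal part. We plan to mollify $f$ (equivalently $u$) in $(t,x)$, so that $u$ is smooth, bounded, and solves the equation classically. The estimates of Step 2 depend only on $\|f\|_{L^q(L^p)}$, so they pass to the limit. For smooth $f$, $u(t)\in\mathbf H^{k,p}$ for all $k$, and the PDE holds by the semigroup property since $\mathcal L$ generates $P_t$. The constant depends on $T_0$ through the time integral.
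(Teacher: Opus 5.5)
Your proposal is correct and follows essentially the same route as the paper: a Duhamel representation of the solution to the Kolmogorov equation, the bound $\|u(t)\|_{\beta,p}\lesssim\|f\|_{L^q(L^p)}$ from Lemma \ref{LEM:KEY:01}(i) with $\beta\in(1+d/p,\alpha-\alpha/q)$, Sobolev embedding to control $u$ and $\nabla u$ uniformly, It\^o's formula with optional stopping, and a density argument. Your backward formulation $u(t)=\int_t^T P_{s-t}f_s\,\d s$ is slightly cleaner than the paper's forward $u$ evaluated at $T_0-t$, which yields $f(T_0-s,\cdot)$ rather than $f(s,\cdot)$ unless $f$ is first time-reversed; and, as you note yourself, the damping parameter $\lambda$ is unnecessary (the paper does not use one).
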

\begin{proof}
 Let us first assume that \( f \in \bC_{0}^{\infty}(\mathbb{R}_{+} \times \mathbb{R}^{d}) \) and define
	\[
	u(t,x)=\int_{0}^{t} P_{t-s} f(s,x) \mathrm{d} s,
	\]
	where \( P_{t} \) is defined by \eqref{EQ:SEMIGROUP:02}. By Lemma \ref{LEM:KEY:01}, it is easy to see that \( u(t,x) \in \bC^{\infty}(\mathbb{R}_{+} \times \mathbb{R}^{d}) \) and solves the following integro-partial differential equation (IPDE to be short):
	\[
	\partial_{t} u(t,x)=\mathcal{L} u(t,x)+f(t,x) .
	\] 
Choosing \( \gamma \in (1+\frac{d}{p}, \alpha-\frac{\alpha}{q}) \), by  \eqref{EQ:0620:01} and Hölder's inequality, we have
\begin{equation}\label{EQ:0715:02}
	\begin{aligned}
		\left\|u(t,\cdot)\right\|_{\gamma, p} & \leq \int_{0}^{t}\left\|P_{t-s} f(s,\cdot)\right\|_{\gamma, p} \mathrm{~d} s \leq C \int_{0}^{t}(t-s)^{-\gamma / \alpha}\left\|f(s,\cdot)\right\|_{p} \mathrm{d} s \\
		& \leq C\left(\int_{0}^{t}(t-s)^{-q^{*} \gamma / \alpha} \mathrm{d} s\right)^{1 / q^{*}}\|f\|_{L^{q}(\mathbb{R}_{+} ; L^{p}(\R^d))} \leq C\|f\|_{L^{q}(\mathbb{R}_{+}; L^{p}(\R^d))},
	\end{aligned}
\end{equation}
	where \( q^{*}=q /(q-1) \).
 Let \( T_{0} > 0 \) be fixed, and let \( \tau \) be a stopping time with respect to the filtration \( \mathbb{F} \). Applying \ito's formula to \( u(T_{0} - t, X_{t}) \) and utilizing Doob's optional stopping theorem, 
	\[
\begin{aligned}
		&\quad \mathbb{E} \left(u\left(T_{0}-T \wedge \tau,X_{T \wedge \tau}\right)|\mathcal{F}_{S}\right)-u\left(T_{0}-S \wedge \tau,X_{S \wedge \tau}\right) \\
	& =\mathbb{E}\left(\int_{S \wedge \tau}^{T \wedge \tau}\left[\partial_{s} u(T_{0}-s,\cdot)+\mathcal{L} u(T_{0}-s,\cdot)\right]\left(X_{s}\right)+\langle \xi_{s}, \nabla u(T_{0}-s,\cdot)\left(X_{s}\right)\rangle \mathrm{d} s \middle|\mathcal{F}_{S}\right) \\
	& \leq \mathbb{E}\left(\int_{S \wedge \tau}^{T \wedge \tau}\left[-f\left(s,X_{s}\right)+\left|\xi_{s}\right| \cdot\left|\nabla u({T_{0}-s},X_{s})\right|\right] \mathrm{d} s \middle| \mathcal{F}_{S}\right),
\end{aligned}
	\]
	which yields by \eqref{EQ:0715:02} and Sobolev’s embedding theorem (see, e.g., \cite[(P. 206, (16)]{Triebel1978}) that
	\[
	\begin{aligned}
		\mathbb{E}\left(\int_{S \wedge \tau}^{T \wedge \tau} f\left(s,X_{s}\right) \mathrm{d} s \middle|\mathcal{F}_{S}\right) & \leq 2 \sup _{s, x}\left|u(s,x)\right|+\sup _{s, x}\left|\nabla u(s,x)\right| \cdot \mathbb{E}\left(\int_{S \wedge \tau}^{T \wedge \tau}\left|\xi_{s}\right| \mathrm{d} s \middle| \mathcal{F}_{S}\right) \\
		& \leq C\|f\|_{L^{q}\left(\mathbb{R}_{+} ; L^{p}(\R^d)\right)}\left(1+\mathbb{E}\left(\int_{S \wedge \tau}^{T \wedge \tau}\left|\xi_{s}\right| \mathrm{d} s \middle| \mathcal{F}_{S}\right)\right),
	\end{aligned}
	\]
	where we have used \( p(\gamma-1)>d \). By a standard density argument, we obtain \eqref{EQ:0715:01} for general \( f \in L^{q}([0, T] \); \( L^{p}(\mathbb{R}^{d})) \).
\end{proof}

\section{Existence of weak solution}\label{SEC:03}

In this section, as an application of Krylov's estimate, we will use the tightness argument and the Skorokhod representation theorem to prove the weak existence for SDE \eqref{EQ:SDE:101} and DDSDE \eqref{EQ:MVSDE:101}. The idea of the proof of the following theorem comes from \cite[Proof of Theorem 4.1]{zhangStochasticDifferentialEquations2013} (see also \cite[Theorem 4.7]{JIN2018}, and \cite[Theorem 2.1]{Huang_SPA_2019} for the case with Gaussian noise).



Specifically, based on Theorem \ref{THM:KRYLOV:01}, coefficients smoothing approximation argument, and compactness methods (Aldous's tightness criterion-Prokhorov's theorem-Skorokhod's representation theorem), and following exactly the approach in Zhang \cite{zhangStochasticDifferentialEquations2013} and related works, we can prove the following result about the existence of weak solution.

\begin{theorem}
Assume \( \alpha \in(1,2],~\gamma \in(1, \alpha),~ p>\frac{d}{\gamma-1} \) and \( q>\frac{\alpha}{\alpha-\gamma} \). For any \( b \in L_{\text{loc}}^{\infty}(\mathbb{R}_{+} ; L^{\infty}(\mathbb{R}^{d}))+ L_{\text{loc}}^{q}(\mathbb{R}_{+} ; L^{p}(\mathbb{R}^{d})) \) and \( x_{0} \in \mathbb{R}^{d} \), there exists a weak solution to SDE \eqref{EQ:SDE:101}. That is, there exists a probability space \( (\tilde{\Omega}, \tilde{\mathcal{F}}, \tilde{\P}) \) and a left-continuous right-limited process \( \tilde{X} \) and \( \tilde{L} \) on it, where \( \tilde{L} \) is a \levy~ process with generate triple $(\sigma,\nu,0)$ and  adapted to the completion of the filtration $(\tilde{\mathcal{F}}_{t})_{t\geq 0}$,  \( \tilde{\mathcal{F}}_{t}:=\sigma^{\tilde{\P}}\{\tilde{X}_{s}, \tilde{L}_{s}, s \leq t\} \), such that
\[
\tilde{X}_{t}=x_{0}+\int_{0}^{t} b(s, \tilde{X}_{s}) \mathrm{d} s+\tilde{L}_{t}, \quad \forall t \geq 0.
\]
\end{theorem}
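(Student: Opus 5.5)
The plan is the classical compactness scheme of Zhang: mollify the drift, solve the regularized equations on a fixed probability space, prove tightness, pass to the limit with Skorokhod's representation, and use Theorem \ref{THM:KRYLOV:01} to control the singular part. Write $b=b_1+b_2$ with $b_1\in L^\infty_{loc}(\R_+;L^\infty)$ and $b_2\in L^q_{loc}(\R_+;L^p)$. Note that $p>\frac{d}{\gamma-1}$ and $q>\frac{\alpha}{\alpha-\gamma}$ with $\gamma\in(1,\alpha)$ imply $p>\frac{d}{\alpha-1}$ and $q>\frac{p\alpha}{p(\alpha-1)-d}$: indeed $1+\frac dp<\gamma$ gives $\alpha-\frac{\alpha}{q}>\gamma>1+\frac dp$. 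Hence Theorem \ref{THM:KRYLOV:01} applies with these $(p,q)$.

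\textbf{Step 1 (approximation).} Let $b_n:=(b_1+b_2)*\rho_n$, with a space (and time) mollifier $\rho_n$, truncated so that $b_n$ is bounded and Lipschitz in $x$. Then $\sup_n\|b_{1,n}\|_\infty<\infty$ and $b_{2,n}\to b_2$ in $L^q([0,T];L^p)$ for every $T$. For each $n$, the SDE $X^n_t=x_0+\int_0^t b_n(s,X^n_s)\,\d s+L_t$ has a unique strong solution on the original space.

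\textbf{Step 2 (uniform Krylov bound).} Apply \eqref{EQ:0715:01} with $\xi_s=b_n(s,X^n_s)$. This gives
\[
\E\Big(\int_{S}^{T}|b_{2,n}(s,X^n_s)|\,\d s\Big|\mathcal F_S\Big)\le C\Big(1+\E\Big(\int_S^T|b_n(s,X^n_s)|\,\d s\Big|\mathcal F_S\Big)\Big)\|b_{2,n}\|_{L^q([S,T];L^p)}.
\]
The $\xi$-term contains the same unknown quantity. I would absorb it by taking $T-S\le\delta$ small, so that $C\|b_2\|_{L^q([S,T];L^p)}\le 1/2$; the $b_1$ part is bounded by $\|b_1\|_\infty\delta$. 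Iterating over subintervals of length $\delta$ then yields, uniformly in $n$, the stopping time $\tau$ and $\mathcal F_S$,
\[
\E\Big(\int_{S\wedge\tau}^{T\wedge\tau}|f(s,X^n_s)|\,\d s\Big|\mathcal F_S\Big)\le C\|f\|_{L^q([S,T];L^p)}.
\]
This absorption is the step I expect to be the main obstacle, because uniform absolute continuity of $\|b_2\|_{L^q([S,T];L^p)}$ in $T-S$ is needed, and it must be uniform over $n$.

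\textbf{Step 3 (tightness).} Use Aldous' criterion in $D([0,T];\R^{2d})$ for the pair $(X^n,L)$. For stopping times $\tau_n\le T$ and $h\le\delta$,
\[
\E|X^n_{\tau_n+h}-X^n_{\tau_n}|\le \|b_1\|_\infty h + C\|b_2\|_{L^q([\tau_n,\tau_n+h];L^p)}+\E|L_{\tau_n+h}-L_{\tau_n}|.
\]
The middle term is at most $C\|b_2\|_{L^q}\,h^{\varepsilon}$ after replacing $q$ by some slightly smaller $q'$ still satisfying the hypotheses, using Hölder in time. The last term is at most $Mh^{1/\alpha}$ by the strong Markov property and (A2). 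All three terms tend to $0$ uniformly in $n$, and $\sup_n\E\sup_t|X^n_t|<\infty$ follows similarly. Prokhorov and Skorokhod then give a space $(\tilde\Omega,\tilde{\mathcal F},\tilde\P)$ and processes $(\tilde X^n,\tilde L^n)\to(\tilde X,\tilde L)$ a.s. in the Skorokhod topology, with $(\tilde X^n,\tilde L^n)$ equal in law to $(X^n,L)$.

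\textbf{Step 4 (identification of the limit).} $\tilde L$ is a Lévy process with the same triplet, and it has independent increments with respect to $\tilde{\mathcal F}_t$. This follows because $L_t-L_s$ is independent of $\sigma(X^n_r,L_r;r\le s)$, and the property is preserved under weak limits at continuity times (all but countably many times). The identity $\tilde X^n_t=x_0+\int_0^t b_n(s,\tilde X^n_s)\,\d s+\tilde L^n_t$ transfers by equality in law. To pass to the limit in the drift, split
\[
|b_n(\tilde X^n)-b(\tilde X)|\le |b_n-b_m|(\tilde X^n)+|b_m(\tilde X^n)-b_m(\tilde X)|+|b_m-b|(\tilde X).
\]
The middle term vanishes by continuity of $b_m$ and a.e.-in-time convergence of $\tilde X^n$. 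For the first term, use the Krylov bound for $\tilde X^n$, inherited through its law: it is at most $C\|b_{2,n}-b_{2,m}\|_{L^qL^p}$. The $L^\infty$ part needs extra care: I would first truncate it by multiplying with $\1_{B_R}$ so that it lies in $L^q L^p$, and control $|X|>R$ by the moment bound. For the last term, I would extend the Krylov estimate to $\tilde X$ by Fatou's lemma: for continuous $f$ the bound passes to the limit, and for general $f$ a monotone class argument applies. Letting $n\to\infty$ and then $m\to\infty$ gives the equation for all $t$ a.s., and right-continuity upgrades this to all $t$ simultaneously.
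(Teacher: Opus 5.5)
Your proposal is correct and follows essentially the route the paper indicates for this theorem: smoothing of $b$, the Krylov estimate of Theorem~\ref{THM:KRYLOV:01} made uniform in $n$, the Aldous--Prokhorov--Skorokhod compactness argument, and the three-term splitting of the drift that the paper carries out in detail in its proof of Theorem~\ref{THM:0915:01}. The only difference is technical: you absorb the $\xi$-term on the right of \eqref{EQ:0715:01} by working on short time intervals (legitimate because $b_n$ is bounded, so the absorbed quantity is finite), whereas the paper's analogous Claim~1 absorbs it through a power $\delta>1$ and Young's inequality; either works.
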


\begin{remark}
The superposition of independent $\alpha_1$ and $\alpha_2$-stable processes results in a composite process with stability index $\alpha_1 \vee \alpha_2$. Consequently, SDEs driven by such combined noise admit solutions for a broader class of $(p, q)$-parameter pairs. Notably, when the driving noise is a linear combination of two stable processes, the associated integrability constraints are relaxed.
\end{remark}

Next, to prove the weak existence of DDSDE \eqref{EQ:MVSDE:101} using the approximation methods from Mishura and  Veretennikov \cite{Veretennikov2020TPMS}, we need to make some assumptions about its drift coefficient.

Given \( \theta \in [1,\alpha) \) with $\alpha\in (1,2]$,  we will consider SDE with initial distributions in the class
\[
\mathscr{P}_{\theta}:=\left\{\mu \in \mathscr{P}: \mu(|\cdot|^{\theta})<\infty\right\},
\]
where $\mathscr{P}$ is space composed by all the   probability measure among $\R^d.$ 
It is well known that \( \mathscr{P}_{\theta} \) is a Polish space under the Wasserstein distance
\[
\mathbb{W}_{\theta}(\mu, \nu):=\inf _{\pi \in \mathscr{C}(\mu, \nu)}\left(\int_{\mathbb{R}^{d} \times \mathbb{R}^{d}}|x-y|^{\theta} \pi(\mathrm{d} x, \mathrm{d} y)\right)^{{1}/{\theta}}, \quad \forall\mu, \nu \in \mathscr{P}_{\theta},
\]
where \( \mathscr{C}(\mu, \nu) \) is the set of all couplings of \( \mu \) and \( \nu \). Moreover, the topology induced by \( \mathbb{W}_{\theta} \) on \( \mathscr{P}_{\theta} \) coincides with the weak topology. 
Let 
\[ 
\mathscr{P}_{\theta}^{ac}:=\left\{\mu \in \mathscr{P}_{\theta}: \mu \text{ is absolutely continuous with respect to the Lebesgue measure in $\R^d$}\right\}.
\]
There exists a sequence \( (b^{n})_{n \geq 1} \) where
\[
b^{n}:[0, T] \times \mathbb{R}^{d} \times \mathscr{P}_{\theta} \rightarrow \mathbb{R}^{d}
\]
is a measurable function, satisfying the following conditions 
(\hypertarget{Htheta}{\(\mathbf{H}^{\theta}\)}):

\noindent(\hypertarget{(H1)}{H1})
 For \( \mu \in \mathscr{P}_{\theta}^{ac} \) and \( \mu^{n} \rightarrow \mu \) in \( \mathscr{P}_{\theta} \), we have
	\[
	\lim _{n \rightarrow \infty}\left|b^{n}\left(t,x, \mu^{n}\right)-b(t,x, \mu)\right|=0,\quad \text{a.e.}\quad (t, x) \in[0, T] \times \mathbb{R}^{d}.
	\]
	
\noindent(\hypertarget{(H2)}{H2}) Let 
\[
\begin{split}
    L^q([0,T];L^p(\R^d))&:=\bigg\{f\in \mathscr{B}([0,T]\times\R^d)/\mathscr{B}({\R^d}):\int_{0}^{T}\left(\int_{\R^d}|f(t,x)|^p\d x\right)^{q/p}\d t<\infty\bigg\}.
\end{split}
\]
For $\alpha\in (1,2],\gamma\in (1,\alpha),p>\frac{d}{\gamma-1},q>\frac{\alpha}{\alpha-\gamma}.$ There exist constant $K>1,$ and non-negative function $G\in L_{loc}^q(\R_{+};L^p(\R^d)),$ such that for any $n\geq 1,$ 
	\[
		| b^{n}(t,x, \mu)| \leq G(t, x)+K, \quad \forall (t, x, \mu) \in[0, T] \times \mathbb{R}^{d} \times \mathscr{P}_{\theta}.
	\]
	
\noindent(\hypertarget{(H3)}{H3}) For each \( n \geq 1 \), there exists a increasing  function \( K_{n}(\cdot)>0\in C([0,\infty);(0,\infty)) \)  such that \( \left\|b^{n}(t,\cdot)\right\|_{\infty} \leq K_{n}(t) \), and
	\[
		\left|b^{n}(t,x, \mu)-b^{n}(t,y, \nu)\right| \leq K_{n}(t)[|x-y|+\mathbb{W}_{\theta}(\mu, \nu)], \quad \forall (t, x, y) \in[0, T] \times \mathbb{R}^{2d}, \mu, \nu \in \mathscr{P}_{\theta}.
	\]
	

Based on the above assumption (\hyperlink{Htheta}{\(\mathbf{H}^{\theta}\)}), Krylov-type estimates, and Aldous’ tightness criterion, as well as compactness methods, we can construct a weak solution of DDSDE \eqref{EQ:MVSDE:101}. The main result in this part is the following.
\begin{theorem}\label{THM:0915:01}
    Assume that $($\hyperlink{Htheta}{\(\mathbf{H}^{\theta}\)}$)$ hold for some constant $\theta >0.$ Let $X_0$ be an $\mathcal{F}_0$-measurable random variable on $\R^d$ with $\mathscr{L}_{X_0}=\mu_0\in  \mathscr{P}^{\theta}.$ There exists a weak solution to DDSDE \eqref{EQ:MVSDE:101} satisfying $\mathscr{L}_{X_\cdot}=C([0,\infty);\mathscr{P}_{\theta})$.
\end{theorem}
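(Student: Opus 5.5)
Throughout, $\theta\in[1,\alpha)$ as in the setting of $(\mathbf{H}^{\theta})$; the restriction $\theta<\alpha$ is what guarantees $\E|L_t|^\theta<\infty$. The plan is the approximation scheme of Mishura and Veretennikov, combined with Theorem \ref{THM:KRYLOV:01} and a tightness/Skorokhod argument. It proceeds in four steps.

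\textbf{Step 1: approximating equations.} For each $n$ I solve
\[
\d X^n_t=b^n(t,X^n_t,\mathscr{L}_{X^n_t})\,\d t+\d L_t,\qquad X^n_0=X_0 .
\]
By (H3), $b^n$ is bounded by $K_n(t)$ and Lipschitz in $(x,\mu)$ with respect to $|\cdot|+\mathbb{W}_\theta$. A Picard/fixed point argument then gives a unique strong solution:
\begin{itemize}
\item freeze a flow $\mu\in C([0,T];\mathscr{P}_\theta)$ and solve the resulting Lipschitz SDE;
\item show that $\mu\mapsto \mathscr{L}_{X^{\mu}_\cdot}$ is a contraction for the metric $\sup_t \e^{-\lambda t}\mathbb{W}_\theta(\mu_t,\nu_t)$.
\end{itemize}
For the contraction I use the synchronous coupling, and if $\theta<1$ I replace $\mathbb{W}_\theta$ by $\mathbb{W}_1$-type estimates. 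The flow lies in $\mathscr{P}_\theta$ because $\mu_0\in\mathscr{P}_\theta$, the drift is bounded, and $\theta<\alpha$.

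\textbf{Step 2: uniform estimates.} Apply Theorem \ref{THM:KRYLOV:01} with $\xi_s=b^n(s,X^n_s,\mathscr{L}_{X^n_s})$ and the bound (H2). The exponents of Theorem \ref{THM:KRYLOV:01} are satisfied: with $\gamma\in(1,\alpha)$, the conditions $p>d/(\gamma-1)$ and $q>\alpha/(\alpha-\gamma)$ give $p>d/(\alpha-1)$ and the required lower bound on $q$.
\begin{itemize}
\item On short intervals $[S,T]$, the term $\E(\int_S^T G(s,X^n_s)\,\d s\,|\,\mathcal F_S)$ is bounded by $C\|G\|_{L^q([S,T];L^p)}\bigl(1+\E(\int_S^T G(s,X^n_s)\,\d s+K(T-S)\,|\,\mathcal F_S)\bigr)$.
\item Choose $T-S$ small enough that $C\|G\|_{L^q([S,T];L^p)}\le 1/2$ and absorb the right-hand side into the left. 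This needs a priori finiteness, which holds by truncation since $b^n$ is bounded.
\item Iterate over finitely many such intervals. This yields a Krylov estimate for $X^n$ uniform in $n$: $\E\int_S^T f(s,X^n_s)\,\d s\le C\|f\|_{L^q([S,T];L^p)}$, together with $\sup_n\E\bigl(\int_0^T G(s,X^n_s)\,\d s\bigr)^m<\infty$.
\item The last bound and $\theta<\alpha$ give $\sup_n\E\sup_{t\le T}|X^n_t|^\theta<\infty$.
\item Uniform $\theta'$-moments for some $\theta'\in(\theta,\alpha)$ give uniform integrability, so tightness in $\mathscr{P}_\theta$ follows.
\end{itemize}

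\textbf{Step 3: tightness and Skorokhod.} Check Aldous' criterion for $(X^n,L)$ in $D([0,T];\R^{2d})$: for stopping times $\tau_n$ and $\delta_n\to0$,
\[
\E\bigl|X^n_{\tau_n+\delta_n}-X^n_{\tau_n}\bigr|\wedge1\le \E\int_{\tau_n}^{\tau_n+\delta_n}(G+K)(s,X^n_s)\,\d s+\E|L_{\delta_n}|\wedge 1\to0 ,
\]
using the conditional form of the Krylov estimate and (A2). By Prokhorov and Skorokhod, there exist $(\tilde X^n,\tilde L^n)\to(\tilde X,\tilde L)$ a.s. in the Skorokhod topology, with the same laws. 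The limit $\tilde L$ is a Lévy process with the right triplet and independent of $\tilde X_0$; its increments are independent of $\tilde{\mathcal F}_t$ by the usual passage to the limit. The moment bounds give $\mu^n_t:=\mathscr{L}_{X^n_t}\to\mu_t:=\mathscr{L}_{\tilde X_t}$ in $\mathscr{P}_\theta$ for all $t$ outside the countable set of fixed discontinuities. Since $\tilde L$ has no fixed jumps, that set is in fact empty, and continuity of $t\mapsto\mu_t$ in $\mathscr{P}_\theta$ follows from the Aldous bound and uniform integrability. Moreover $\mu_t$ is absolutely continuous for a.e. $t$: Fatou and the Krylov bound give $\int_0^T\mu_t(f)\,\d t\le C\|f\|_{L^q L^p}$, so $\mu_t(\d x)\,\d t$ has an $L^{q^*}L^{p^*}$ density.

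\textbf{Step 4: identifying the limit (main obstacle).} I must show
\[
\int_0^t b^n(s,\tilde X^n_s,\mu^n_s)\,\d s\to\int_0^t b(s,\tilde X_s,\mu_s)\,\d s
\]
in probability. Split the difference into three terms, with $\bar b=b(\cdot,\cdot,\mu_\cdot)$:
\begin{itemize}
\item[(a)] $b^n(s,\tilde X^n_s,\mu^n_s)-\bar b_m(s,\tilde X^n_s)$, where $\bar b_m$ is a continuous compactly supported $L^qL^p$-approximation of $\bar b$;
\item[(b)] $\bar b_m(s,\tilde X^n_s)-\bar b_m(s,\tilde X_s)$;
\item[(c)] $\bar b_m(s,\tilde X_s)-b(s,\tilde X_s,\mu_s)$.
\end{itemize}
Term (b) vanishes by a.s. Skorokhod convergence and dominated convergence. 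Term (c) is controlled by the Krylov bound for $\tilde X$, which is inherited from the uniform bounds on $\tilde X^n$ via Fatou.

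Term (a) is the delicate one, because $b^n(s,x,\mu^n_s)-\bar b(s,x)\to0$ only for a.e. $(s,x)$, by (H1) and absolute continuity of $\mu_s$. I handle it as follows:
\begin{itemize}
\item Truncate on $\{|x|\le R\}$ and on $\{G\le N\}$.
\item Use Egorov on $[0,T]\times B_R$ to obtain uniform convergence outside a set $E_\varepsilon$ of small Lebesgue measure.
\item Control $\E\int_0^T\1_{E_\varepsilon}(s,\tilde X^n_s)(G+K)\,\d s$ uniformly in $n$ by the Krylov estimate with Hölder, since $\|\1_{E_\varepsilon}(G+K)\|_{L^{q'}L^{p'}}\to0$ for slightly smaller admissible exponents $(p',q')$.
\end{itemize}
This step is where the uniform-in-$n$ Krylov estimate is essential, and where I expect the most technical care. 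Finally, pass to the limit in $\tilde X^n_t=\tilde X^n_0+\int_0^t b^n\,\d s+\tilde L^n_t$ to obtain the equation for $\tilde X$ with $\mathscr{L}_{\tilde X_t}=\mu_t$, and $\mu\in C([0,\infty);\mathscr{P}_\theta)$ by Step 3.
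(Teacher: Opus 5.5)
Your proposal is correct and follows essentially the paper's route: Lipschitz approximations $b^n$ solved by a fixed point (as in Lemma~\ref{LEM:0912:01}), a Krylov bound uniform in $n$, Aldous--Prokhorov--Skorokhod, and a three-term splitting of the drift (you get the uniform Krylov bound by linear absorption on short intervals, the paper by the sublinear $L^\delta$/Young argument of Claim~1), and your explicit handling of uniform integrability for $\mathbb{W}_\theta$-convergence, of the absolute continuity of $\mu_t$ required by (H1), and of the continuity of $t\mapsto\mu_t$ in $\mathscr{P}_\theta$ fills in points the paper leaves implicit. Two small remarks: first, the Egorov step in term (a) is unnecessary, because $\mu^n_s$ is deterministic, so the uniform Krylov bound applied to $\1_{\{|x|\le R\}}|b^n(s,x,\mu^n_s)-b(s,x,\mu_s)|\le 2\,\1_{\{|x|\le R\}}(G+K)$, together with dominated convergence in $L^q([0,T];L^p(\R^d))$, already sends it to $0$ (this is how the paper treats $I_1^{(n,m)}$); second, (A1)--(A5) only control $\E|L_t|$, so $\E|L_t|^{\theta}<\infty$ is an extra assumption on the large jumps, implicit in the paper's Lemma~\ref{LEM:0912:01} as well, rather than a consequence of $\theta<\alpha$, and uniform $\theta'$-moments of $X^n_t$ fail when $X_0$ has only a $\theta$-moment, so you should argue with $X^n_t-X_0$ instead, which suffices because $X_0$ does not depend on $n$.
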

We give the proof of Theorem \ref{THM:0915:01} later.
To this end, we first establish the following preparatory lemmas. The first lemma partially extends Huang and Wang \cite[Lemma 3.4]{Huang_SPA_2019} to the case of general \levy~noise.
\begin{lemma}\label{LEM:0915:01}
   Let \( \left(\bar{\Omega}, (\bar{\mathcal{F}}_{t})_{t\geq 0}, \bar{\mathbb{P}}\right) \) and \( (\bar{X}_{t},\bar{L}_t) \) be a weak solution to DDSDE \eqref{EQ:MVSDE:101} with \( \mu_{t}:=\mathscr{L}_{\bar{X}_{t}}|\bar{\mathbb{P}} \). If the SDE
\begin{equation}\label{EQ:SDE:0912}
    \d X_{t}=b\left(t,X_{t}, \mu_{t}\right) \d t+ \d L_{t},
\end{equation}
has a unique strong solution \( X \) up to life time with \( \mathscr{L}_{X_{0}}=\mu_{0} \), then DDSDE \eqref{EQ:MVSDE:101} has a strong solution.
\end{lemma}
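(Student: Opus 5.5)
The plan is the Yamada--Watanabe argument used in Huang and Wang \cite[Lemma 3.4]{Huang_SPA_2019}, adapted to L\'evy noise. Once the flow of marginals $\mu_t$ is frozen, the DDSDE becomes the ordinary SDE \eqref{EQ:SDE:0912}, and $\bar X$ is a weak solution of it. Strong uniqueness of \eqref{EQ:SDE:0912} should then force uniqueness in law, so the strong solution built on any given probability space will have the same marginals $\mu_t$. That makes it a strong solution of \eqref{EQ:MVSDE:101}.

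First I check that $(\bar X,\bar L)$ on $(\bar\Omega,(\bar{\mathcal F}_t),\bar{\mathbb P})$ is a weak solution of \eqref{EQ:SDE:0912}. This is immediate, since $b(t,\bar X_t,\mathscr L_{\bar X_t})=b(t,\bar X_t,\mu_t)$ for every $t$. Its initial law is $\mu_0$.

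Next I apply the Yamada--Watanabe principle for SDEs driven by L\'evy processes, in the form where the driving noise is the pair $(W,N)$, or equivalently the process $L$ on path space $D([0,\infty);\R^d)$. The point is that existence of a weak solution together with pathwise uniqueness gives uniqueness in law. I will also use the fact that the strong solution is a measurable functional $X=F(X_0,L_\cdot)$, with $F$ independent of the probability space. I would cite the general Yamada--Watanabe--Engelbert theory (Kurtz's version, or Jacod's for semimartingales) for this. The hypothesis gives uniqueness only ``up to life time'', so I must also check that the strong solution does not explode. It has the same law as $\bar X$ on $[0,\zeta)$, and $\bar X$ is a global c\`adl\`ag process, so the lifetime is a.s.\ infinite.

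Then, on the original filtered space $(\Omega,(\mathcal F_t),\P)$ carrying $L$ and $X_0$ with $\mathscr L_{X_0}=\mu_0$, let $X$ be the unique strong solution of \eqref{EQ:SDE:0912}. By the previous step, $\mathscr L_X=\mathscr L_{\bar X}$ on path space, hence $\mathscr L_{X_t}=\mu_t$ for all $t$. Substituting back, $b(t,X_t,\mu_t)=b(t,X_t,\mathscr L_{X_t})$, so $X$ solves \eqref{EQ:MVSDE:101}. It is adapted to the filtration generated by $X_0$ and $L$, so it is a strong solution.

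The main obstacle is the Yamada--Watanabe step. It needs the joint law of $(X_0,L)$ to match across spaces. It also needs $\bar L$ to be an $(\bar{\mathcal F}_t)$-L\'evy process, meaning $\bar L_{t+\cdot}-\bar L_t$ is independent of $\bar{\mathcal F}_t$, and $X_0$ to be independent of $L$. I would handle this by building the standard coupling on $\R^d\times D\times D$: take two solutions, condition on $(X_0,L)$ via regular conditional distributions, and check that $L$ remains a L\'evy process in the enlarged filtration. Pathwise uniqueness then identifies the two solutions.
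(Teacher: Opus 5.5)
Your proposal is correct and follows the paper's route. You observe that $\bar X$ is a weak solution of the frozen SDE, and you use the Yamada--Watanabe principle to upgrade pathwise uniqueness to uniqueness in law, so the strong solution $X$ is non-explosive with $\mathscr{L}_{X_t}=\mu_t$ and therefore solves the DDSDE. Your extra care about non-explosion, and about $\bar L$ remaining an $(\bar{\mathcal F}_t)$-L\'evy process in the coupling, fills in details the paper's short proof leaves implicit.
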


\begin{proof}
    Since \( \mu_{t}=\mathscr{L}_{\bar{X}_{t}}|\overline{\mathbb{P}}, (\bar{X}_{t}) \) is a weak solution to SDE \eqref{EQ:SDE:0912}. By the Yamada-Watanabe principle, the strong uniqueness of SDE \eqref{EQ:SDE:0912} implies the weak uniqueness, so that \( (X_{t}) \) is non-explosive with \( \mathscr{L}_{X_{t}}=\mu_{t}, t \geq 0 \). Therefore, \( (X_{t}) \) is not only the strong solution to SDE \eqref{EQ:SDE:0912} but also DDSDE \eqref{EQ:MVSDE:101}.
\end{proof}

The following lemma establishes existence and uniqueness of strong solutions under Lipschitz type conditions, generalizing results from Huang and Wang \cite[Lemma 3.5]{Huang_SPA_2019} to general \levy~driving noise.

\begin{lemma}\label{LEM:0912:01}
Let $\theta\in[1,\alpha),$ and $\delta_{0}$  be the Dirac measure at point $0.$ If $b(t,0,\delta_0)$ is bounded in $t\in \R_{+},$ and there exists a increasing function  $K(t)\in C([0,\infty);(0,\infty)),$ such that
\begin{equation}\label{EQ:1004:01}
        |b(t,x,\mu)-b(t,y,\nu)|\leq K(t)[|x-y|+W_{\theta}(\mu,\nu)],\quad\forall x,y\in \R^d,\mu,\nu\in \mathscr{P}_{\theta},t>0.
    \end{equation}
    Then for any $X_0$ with $\E[|X_0|^{\theta}]<\infty,$ DDSDE \eqref{EQ:MVSDE:101} has a unique strong solution $(X_t)_{t\geq 0 }.$
 \end{lemma}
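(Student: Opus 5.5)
We prove Lemma \ref{LEM:0912:01} by a Banach fixed point argument on the flow of marginals, followed by a Gronwall-type contraction estimate in $\mathbb{W}_\theta$. Fix $T>0$ and work on $[0,T]$; since $K$ is increasing we may replace $K(t)$ by $K(T)$ throughout. For a path $\mu=(\mu_t)_{t\in[0,T]}\in C([0,T];\mathscr{P}_\theta)$ with $\mu_0=\mathscr{L}_{X_0}$, consider the frozen SDE
\[
\d X^\mu_t=b(t,X^\mu_t,\mu_t)\,\d t+\d L_t .
\]
By \eqref{EQ:1004:01} the drift $x\mapsto b(t,x,\mu_t)$ is globally Lipschitz. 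Moreover,
\[
|b(t,x,\mu_t)|\le |b(t,0,\delta_0)|+K(T)\big(|x|+\mathbb{W}_\theta(\mu_t,\delta_0)\big),
\]
and $\mathbb{W}_\theta(\mu_t,\delta_0)=\mu_t(|\cdot|^\theta)^{1/\theta}$ is bounded on $[0,T]$ by continuity. Hence the drift has linear growth, and since the noise is additive, the classical pathwise Picard iteration gives a unique non-explosive strong solution $X^\mu$.

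Next we check that $\Phi(\mu)_t:=\mathscr{L}_{X^\mu_t}$ lies again in $C([0,T];\mathscr{P}_\theta)$. The moment bound comes from
\[
|X^\mu_t|\le |X_0|+C\int_0^t(1+|X^\mu_s|)\,\d s+|L_t|,
\]
together with $\E\sup_{s\le T}|L_s|^\theta<\infty$. This last fact holds for $\theta<\alpha$: for the jump part, $\int_{|z|>1}|z|^\theta\nu(\d z)<\infty$ (implicit in (\hyperlink{(A2)}{A2}) for stable-type noises; I would state it as a consequence of the assumptions), and BDG handles the small jumps and the Brownian part. Gronwall then gives $\E\sup_{t\le T}|X^\mu_t|^\theta<\infty$. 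Continuity in $t$ follows from right-continuity of paths and the absence of fixed jump times, with dominated convergence based on the $\theta$-moment bound yielding $\mathbb{W}_\theta$-continuity.

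For the contraction, take $\mu,\nu$ and couple the two solutions with the same $X_0$ and the same $L$. The noise cancels, so
\[
|X^\mu_t-X^\nu_t|\le K(T)\int_0^t\big(|X^\mu_s-X^\nu_s|+\mathbb{W}_\theta(\mu_s,\nu_s)\big)\,\d s .
\]
By pathwise Gronwall, $|X^\mu_t-X^\nu_t|\le C\int_0^t\mathbb{W}_\theta(\mu_s,\nu_s)\,\d s$, which is deterministic. Consequently
\[
\mathbb{W}_\theta(\Phi(\mu)_t,\Phi(\nu)_t)\le (\E|X^\mu_t-X^\nu_t|^\theta)^{1/\theta}\le C\int_0^t\mathbb{W}_\theta(\mu_s,\nu_s)\,\d s .
\]
Using the weighted metric $\sup_{t\le T}e^{-\lambda t}\mathbb{W}_\theta(\mu_t,\nu_t)$ with $\lambda>2C$, $\Phi$ becomes a contraction on the complete space of such paths starting at $\mu_0$. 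Its unique fixed point $\mu$ makes $X^\mu$ a strong solution of \eqref{EQ:MVSDE:101}.

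For uniqueness, any strong solution $X$ has marginals $\mu_t=\mathscr{L}_{X_t}$ forming a fixed point of $\Phi$ (one shows its $\theta$-moments are finite by the same Gronwall bound). Hence $\mu$ coincides with the constructed one, and pathwise uniqueness of the frozen SDE gives $X=X^\mu$. Finally, consistency across $T$ yields a solution on $[0,\infty)$.

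The main obstacle is the $\theta$-moment control of the Lévy noise ensuring $\Phi$ maps into $\mathscr{P}_\theta$. Assumption (\hyperlink{(A2)}{A2}) only provides first moments, so for $\theta>1$ one must argue via the tail of $\nu$ (finite $\theta$-moment of large jumps for $\theta<\alpha$), which I would verify or add as a mild standing hypothesis.
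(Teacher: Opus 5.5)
Your proof is correct and is essentially the paper's argument. The paper's distributional iteration (solve for $X^{(n)}$ with the law $\mu^{(n-1)}$ frozen) is exactly the iteration of your map $\Phi$, and both rest on the additive noise cancelling in the difference of two solutions; your pathwise Gronwall bound (which controls $|X^\mu_t-X^\nu_t|$ deterministically and so bypasses the paper's $L^2$ estimate plus Jensen), the weighted sup-metric (instead of a contraction on a short interval $[0,t_0]$ followed by restarting), and the elementary pathwise solvability of the frozen equation (instead of citing Butkovsky et al.) are only streamlinings.

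On the point you hedge, you cannot derive $\int_{|z|>1}|z|^\theta\nu(\d z)<\infty$ from (A1)--(A5); it must be added as a hypothesis. Brownian motion plus an independent $\alpha_2$-stable process with $\alpha_2<\theta<2$ (the paper's own example (8)) satisfies (A) with $\alpha=2$, yet $\E|L_t|^\theta=\infty$. So for any bounded $b$ (e.g.\ $b\equiv 0$) the bound $|X_t-X_0-L_t|\le t\|b\|_\infty$ forces $\mathscr{L}_{X_t}\notin\mathscr{P}_\theta$ for $t>0$, and the lemma as stated fails. The paper's proof needs the same condition tacitly: in the large-jump term of its It\^o formula for $(1+|X^{(n)}_t|^2)^{\theta/2}$, and in its appeal to Wang's Brownian-motion Lemma 2.3 for $\sup_{s\le T}\E|X^{(n)}_s|^\theta<\infty$.
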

\begin{proof}
    For each $n\geq 1,$ we have the following iterated SDE
 \begin{equation}\label{EQ:1004:02}
         \d X_t^{(n)}= b(t,X_t^{(n)},\mu_{t}^{(n-1)}) \d t+\d L_t, 
 \end{equation}
    where $\mu_{t}^{(n-1)}=\mathscr{L}_{X_t^{(n-1)}}.$ Note that 
    \begin{equation*}
        \d L_t =\sigma \d B_{t}+\int_{|x|\leq 1} x\tilde{N}(\d t,\d x)+\int_{|x|\geq 1} x{N}(\d t,\d x),
    \end{equation*}
    then by using \ito's formula to $(1+|X_t^{(n)}|^2)^{\theta/2},$ one has
\begin{align*}
    &\quad \d (1+|X_t^{(n)}|^2)^{\frac{\theta}{2}} \\&=\frac{\theta}{2} (1+|X_t^{(n)}|^2)^{\frac{\theta}{2}-1}\left(2\langle X_t^{(n)},b(t,X_t^{(n)},\mu_t^{(0)})\rangle+\operatorname{tr}(\sigma\sigma^{\top})+\frac{(\theta-2)|\sigma^{\top}X_t^{(n)}|^2}{1+|X_t^{(n)}|^2}\right)\d t\\
    &\quad+\theta(1+|X_t^{(n)}|^2)^{\frac{\theta}{2}-1}\langle X_t^{(n)},\sigma\d W_t\rangle+ \int_{|x|\leq 1} \left[\left(1+|X_t^{(n)}+x|^2\right)^{\frac{\theta}{2}}-\left(1+|X_t^{(n)}|^2\right)^{\frac{\theta}{2}}\right] \tilde{N}(\d t,\d x)\\
    &\quad+ \int_{|x|\leq 1} \left[\left(1+|X_t^{(n)}+x|^2\right)^{\frac{\theta}{2}}-\left(1+|X_t^{(n)}|^2\right)^{\frac{\theta}{2}} - \left\langle x, \theta(1+|X_t^{(n)}|^2)^{\frac{\theta}{2}-1}X_t^{(n)}\right\rangle\right] \nu(\d x)\d t\\
    &\quad +\int_{|x|> 1} \left[\left(1+|X_t^{(n)}+x|^2\right)^{\frac{\theta}{2}}-\left(1+|X_t^{(n)}|^2\right)^{\frac{\theta}{2}}\right] N(\d t,\d x).
\end{align*}
Theorem 2.1 in \cite{butkovskyStrongRateConvergence2024} guarantees that SDE \eqref{EQ:1004:02} admits a unique strong solution $X^{(n)}$. Furthermore, following the argument as the proof of Lemma 2.3-(1) in \cite{Wang_SPA_2018}, we  have
    $\sup_{s\leq T} \E[|X_s^{(n)}|^{\theta}]<\infty.$ 

    Let $\xi_t^{(n)}:=X_t^{(n+1)}-X_t^{(n)}.$ Note that
    \[
    \d \xi_t^{(n)}=[b(t,X_t^{(n+1)},\mu_{t}^{(n)})-b(t,X_t^{(n)},\mu_{t}^{(n-1)})]\d t.
    \]
    By Newton-Leibniz formula and \eqref{EQ:1004:01}, there exists increasing function $K_0(\cdot):[0,\infty)\to [0,\infty),$ 
   \begin{equation}\label{EQ:0407:03}
       \begin{split}
             \begin{split}
        \d [|\xi_t^{(n)}|^2] & =
        2\langle \xi_t^{(n)}, b(t,X_t^{(n+1)},\mu_t^{(n)})-b(t,X_t^{(n)},\mu_t^{(n-1)})\rangle \d t\\
        &\leq 2|\xi_t^{(n)}||b(t,X_t^{(n+1)},\mu_{t}^{(n)})-b(t,X_t^{(n)},\mu_{t}^{(n-1)})| \d t\\
        & \leq K_0(t)[|\xi_t^{(n)}|^2+\mathbb{W}_{\theta}(\mu_t^{(n)},\mu_t^{(n-1)})^2]\d t.
   \end{split}
       \end{split}
   \end{equation}
   Recall that we suppose that $\theta\in [1,\alpha).$ Since \( \xi_{0}^{(n)}=0 \), it follows from \eqref{EQ:0407:03}  and \gron's lemma that
\[
\begin{aligned}
\mathbb{E}[|\xi_{t}^{(n)}|^{2}] & \leq \int_{0}^{t} K_{0}(s) \mathbb{W}_{\theta}\left(\mu_{s}^{(n)}, \mu_{s}^{(n-1)}\right)^{2} \mathrm{d} s \left(\exp{\int_{0}^{t}K_0(s)\d s}\right) \\
& \leq \int_{0}^{t}K_{0}(s)\d s  \left(\exp{\int_{0}^{t}K_{0}(s)\d s}\right) \sup _{s \in[0, t]}\left(\mathbb{E}\left|\xi_{s}^{(n-1)}\right|^{\theta}\right)^{\frac{2}{\theta}}\\
&\leq K_0(T)t\e^{K_0(T)T}\sup _{s \in[0, t]}\left(\mathbb{E}\left[\left|\xi_{s}^{(n-1)}\right|^{\theta}\right]\right)^{\frac{2}{\theta}}, \quad t \in[0, T], n \geq 1.
\end{aligned}
\]
By Jensen's inequality, we can find a constant \( K_{1}>0 \) such that
\[
\sup _{s \in[0, t]} \mathbb{E}[|\xi_{s}^{(n)}|^{\theta}] \leq K_{1} t^{\frac{\theta}{2}} \sup _{s \in[0, t]} \mathbb{E}\left[|\xi_{s}^{(n-1)}|^{\theta}\right], \quad n \geq 1, t \in[0, T].
\]
So, taking \( t_{0} \in(0, T \wedge K_{1}^{-{2}/{\theta}}) \), we can find a constant \( \varepsilon \in(0,1) \) such that
\[
\sup _{s \in\left[0, t_{0}\right]} \mathbb{E}[|\xi_{s}^{(n)}|^{\theta}] \leq \varepsilon^{n} \sup _{s \in\left[0, t_{0}\right]} \mathbb{E}[|X_{s}^{(1)}-X_{0}|^{\theta}], \quad n \geq 1.
\]
Therefore, $(\xi_{t}^{(n)})_{n\geq 1} =(X_t^{(n+1)}-X_t^{(n)})_{n\geq 1}$ is Cauchy sequence in $L^{\theta}(\R^d)$ for any \( t \in\left[0, t_{0}\right] \). Moreover, there exists an \( \mathcal{F}_{t} \)-measurable random variable \( X_{t} \) on \( \mathbb{R}^{d} \) such that
\[
\lim _{n \rightarrow \infty} \sup _{t \in\left[0, t_{0}\right]} \mathbb{W}_{\theta}(\mu_{t}^{(n)}, \mu_{t})^{\theta} \leq \lim _{n \rightarrow \infty} \sup _{t \in[0, t_{0}]} \mathbb{E}[|X_{t}^{(n)}-X_{t}|^{\theta}]=0,
\]
where \( \mu_{t}:=\mathscr{L}_{X_{t}} \). Combining this with \eqref{EQ:1004:01} and letting \( n \rightarrow \infty \) in the equation
\[
X_{t}^{(n)}=\int_{0}^{t} b(s,X_{s}^{(n)}, \mu_{s}^{(n-1)}) \mathrm{d} s+L_t, \quad n \geq 1, t \in\left[0, t_{0}\right].
\]
we derive for every \( t \in\left[0, t_{0}\right],\)
\[
X_{t}=\int_{0}^{t} b(s,X_{s}, \mu_{s}) \mathrm{d} s+L_t.
\]
Thus, \((X_{s})_{s \in[0, t_{0}]} \) has a version with right continuous and left limits that is a strong solution of \eqref{EQ:MVSDE:101} up to time \( t_{0} \). The uniqueness is trivial by using condition \eqref{EQ:1004:01} and  \ito's formula. Finally, since $t_{0}>0$ is independent of $X_{0}$ and $\sup _{t \in\left[0, t_{0}\right]} \mathbb{E}[\left|X_{t}\right|^{\theta}]<\infty$, we conclude that \eqref{EQ:MVSDE:101} has a unique solution $\left(X_{t}\right)_{t \geq 0}$.
The proof is complete.
\end{proof}
Based on the preparations outlined above, we can now proceed to prove the main result by means of distribution iteration argument. 
\begin{proof}[Proof of Theorem \ref{THM:0915:01}]
By Lemma \ref{LEM:0912:01}, the condition (\hyperlink{H3}{H3}) in (\hyperlink{Htheta}{\(\mathbf{H}^{\theta}\)}) implies that for any \(n \geq 1\), the approximation DDSDE
    \begin{equation}\label{EQ:0408:01}
        \d X_t^n =b^n(t,X_t^n,\mathscr{L}_{X_t^n})\d t+\d L_t,\quad X_0^n=X_0
    \end{equation}
    has a unique strong solution $(X_t^n)_{t\geq 0}.$

    As usual, let \( \mathbb{D} \) be the space of all \( \mathbb{R}^{d} \)-valued càdlàg functions on \( \R_{+} \) equipped with the Skorokhod topology such that \( \mathbb{D} \) is a Polish space. We establish the following claims, respectively.

    Claim $1$, For some $\delta >1,$ we have 
    \begin{equation}
        \sup_{n\in \mathbb{N}}\E\int_{0}^{T}\left|b^n(s,X_s^n,\mathscr{L}_{X_s^n})\right|^{\delta} \d s <\infty,\quad \forall T>0.
    \end{equation}
    In fact, choosing $\delta>1,$ and $p^{\prime}\in (\frac{d}{\gamma-1},p),q^{\prime}\in (\frac{\alpha}{\alpha-\gamma},q)$ such that $p^{\prime}\delta=p,q^{\prime}\delta=q,$ by (\hyperlink{(H2)}{H2}) in $($\hyperlink{{Htheta}}{\(\mathbf{H}^{\theta}\)}$)$,
    \begin{equation*}
        \begin{split}           \E\int_{0}^{T}\left|b^n(s,X_s^n,\mathscr{L}_{X_s^n})\right|^{\delta} \d s
        &\leq \E \int_{0}^{T}|G(s,X_s^n)|^{\delta}\d s,
        \end{split}
    \end{equation*}
   and  moreover, reformulate DDSDE \eqref{EQ:0408:01} as 
   \begin{equation*}
       X_t^n =X_0+\int_{0}^{t}b^n(s,X_s^n,\mathscr{L}_{X_t^n})\d s+L_t =:X_0+\int_{0}^{t}\xi_s\d s+L_t,
   \end{equation*}
    then due to Theorem \ref{THM:KRYLOV:01}, and Young's inequality, one has
    \begin{equation*}
        \begin{split}
             \E \int_{0}^{T}|G(s,{X}_s^n)|^{\delta}\d s 
        &\leq C_T\left(1+\E \int_{0}^{T}|G(s,{X}_s^n)|\d s\right)\||G|^{\delta}\|_{L^{q^{\prime}}([0,T];L^{p^{\prime}}(\R^d))}\\         
        &\leq C_T\left(1+\E \int_{0}^{T}|G(s,{X}_s^n)|\d s\right)\cdot\|G\|^{\delta}_{L^q([0,T];L^{p}(\R^d))}\\
        &\leq \frac{1}2\E \int_{0}^{T}|G(s,X_s^n)|^{\delta}\d s+ C_T\|G\|^{\frac{\delta^2}{\delta-1}}_{L^q([0,T];L^{p}(\R^d))}\\
        &\quad+C_T\|G\|^{\delta}_{L^q([0,T];L^{p}(\R^d))}<\infty.
        \end{split}
    \end{equation*}
    So, there exists $\delta>1,$ such that for any $T>0,$ 
    \[
    \sup_{n\in \mathbb{N}}\E\int_{0}^{T}\left|b^n(s,X_s^n,\mathscr{L}_{X_s^n})\right|^{\delta} \d s <\infty.
    \]
Then, set $H_t^n:=\int_{0}^{t}b^n(s,X_s^n,\mathscr{L}_{X_s^n})\d s,$ using Claim $1,$ it is easy to check that the following Aldous's tightness criterions (cf. \cite{Aldous1978AoP}) hold:
\[
\lim _{N \rightarrow \infty} \varlimsup_{n \rightarrow \infty} \P\bigg(\sup _{t \in[0, T]}\left|H_{t}^{n}\right| \geq N\bigg)=0, \quad \forall T>0,
\]
and
\[
\lim _{\varepsilon \rightarrow 0} \varlimsup_{n \rightarrow \infty} \sup _{\tau \in \mathcal{S}_{T}} \P\left(\left|H_{\tau}^{n}-H_{\tau+\varepsilon}^{n}\right| \geq a\right)=0, \quad \forall T, a>0,
\]
where \( \mathcal{S}_{T} \) denotes all the bounded stopping times with bound \( T \). Thus, the law of \( t \mapsto H_{t}^{n} \) in \( \mathbb{D} \) is tight, and so is \( (H^{n}, L \)). By Prokhorov's theorem, there exists a subsequence still denoted by \( \{n\} \) such that the law of \( (H^{n}, L) \) in \( \mathbb{D} \times\mathbb{D} \) weakly converges, which then implies that the law of \( (X^{n}, L\)) weakly converges. By Skorokhod's representation theorem, there is a probability space \( (\tilde{\Omega}, \tilde{\mathcal{F}}, \tilde{\P}) \) and the \( \mathbb{D} \times \mathbb{D} \)-valued random variables \( (\tilde{X}^{n}, \tilde{L}^{n}) \) and \( (\tilde{X}, \tilde{L})\) such that

(i) $(\tilde{X}^{n}, \tilde{L}^{n})$  has the same law as $(X^{n}, L) $ in $ \mathbb{D} \times \mathbb{D}.$

(ii) \( (\tilde{X}^{n}, \tilde{L}^{n}) \) converges to \( (\tilde{X},\tilde{L}\)),~\( \tilde{\P}\)-a.s.

    In particular, $\tilde{L}$ is still a \levy~process with respect to the completed filtration   $\tilde{\mathcal{F}}_t:=\sigma^{\tilde{\P}}(\tilde{X}_{s}, \tilde{L}_{s}, s \leq t)$ and  has the same \levy~symbol as process $L,$ moreover
    \begin{equation}
        \tilde{X}_{t}^{n}=\tilde{X}_{0}+\int_{0}^{t} b^{n}(s, \tilde{X}_{s}^{n},\mathscr{L}_{\tilde{X}_{s}^{n}}) \mathrm{d} s+\tilde{L}_{t}^{n},
    \end{equation}
    with $\mathscr{L}_{\tilde{X}_{0}^{n}}|\tilde{\P}=\mathscr{L}_{{X}_{0}^{n}}|{\P}.$

    Claim 2: For any non-negative measurable function \( f \in L^q([0,T];L^{p}(\R^d))\) and \( T>0 \), we have
\[
\tilde{\mathbb{E}} \int_{0}^{T} f(s,\tilde{X}_{s}) \mathrm{d} s \leq C_{T}\|f\|_{L^{q}([0, T] ; L^{p}(\mathbb{R}^{d}))},
\]
where \( \tilde{\mathbb{E}} \) denotes the expectation with respect to the probability measure \( \tilde{\P} \).
Let \( f \in \bC_{0}([0, T] \times \mathbb{R}^{d}) \). By the dominated convergence theorem, we have
\begin{equation}\label{EQ:0912:02}
    \tilde{\mathbb{E}} \int_{0}^{T} f(s,\tilde{X}_{s}) \mathrm{d} s=\lim _{n \rightarrow \infty} \tilde{\mathbb{E}} \int_{0}^{T} f(s,\tilde{X}_{s}^{n}) \mathrm{d} s=\lim _{n \rightarrow \infty} \mathbb{E} \int_{0}^{T} f(s,X_{s}^{n}) \mathrm{d} s \leq C\|f\|_{L^{q}([0, T] ; L^{p}(\mathbb{R}^{d}))}
\end{equation}
where in the last step we have used  (\hyperlink{H2}{H2}) in $($\hyperlink{{Htheta}}{\(\mathbf{H}^{\theta}\)}$)$ and Krylov type estimate \eqref{EQ:0715:01}. For general \( f \), it follows by a standard monotone class argument. 

The proof will be finished if one can show the following claim.

Claim 3: For any  $T>0$, we have
\begin{equation}
    \lim _{n \rightarrow \infty} \tilde{\mathbb{E}}\bigg(\int_{0}^{T}\left|b^{n}(s, \tilde{X}_{s}^{n},\mathscr{L}_{\tilde{X}_{s}^{n}})-b(s, \tilde{X}_{s},\mathscr{L}_{\tilde{X}_{s}})\right| \mathrm{d} s\bigg)=0.
\end{equation}
 For any $m,n\in \mathbb{N},$ 
\begin{equation}
     \begin{aligned}
&\quad \tilde{\mathbb{E}}\left(\int_{0}^{t}\left|b^{n}(s, \tilde{X}_{s}^{n},\mathscr{L}_{\tilde{X}_{s}^{n}})-b(s, \tilde{X}_{s},\mathscr{L}_{\tilde{X}_{s}})\right| \mathrm{d} s\right)\\
&\leq  \tilde{\mathbb{E}}\left(\int_{0}^{t}\left|b^{n}(s, \tilde{X}_{s}^{n},\mathscr{L}_{\tilde{X}_{s}^{n}})-b^{m}(s, \tilde{X}_{s}^{n},\mathscr{L}_{\tilde{X}_{s}})\right| \mathrm{d} s\right) \\
& \quad +\tilde{\mathbb{E}}\left(\int_{0}^{t}\left|b^{m}(s, \tilde{X}_{s}^{n},\mathscr{L}_{\tilde{X}_{s}})-b^{m}(s, \tilde{X}_{s},\mathscr{L}_{\tilde{X}_{s}})\right| \mathrm{d} s\right) \\
& \quad +\tilde{\mathbb{E}}\left(\int_{0}^{t}\left|b^{m}(s, \tilde{X}_{s},\mathscr{L}_{\tilde{X}_{s}})-b(s, \tilde{X}_{s},\mathscr{L}_{\tilde{X}_{s}})\right| \mathrm{d} s\right) \\
&= : I_{1}^{(n, m)}(t)+I_{2}^{(n, m)}(t)+I_{3}^{(n, m)}(t) .
\end{aligned}
\end{equation}
We estimate $I_{i}^{(n, m)},i=1,2,3,$ respectively.  For simplicity, let $\tilde{\mu}_{t}=\mathscr{L}_{\tilde{X}_{t}} $ and $\tilde{\mu}_{t}^{n}=\mathscr{L}_{\tilde{X}_{t}^{n}}$, 
    \begin{align*}
        I_{1}^{(n, m)}(t) &=\E \left(\int_{0}^{t} \1_{\{|\tilde{X}_s^n|\leq R\}}\left|b^n(s,\tilde{X}_s^n,\tilde{\mu}_s^n)-b^m(s,\tilde{X}_s^n,\tilde{\mu}_s)\right|\d s\right)\\
        &\quad +\E \left(\int_{0}^{t} \1_{\{|\tilde{X}_s^n|> R\}}\left|b^n(s,\tilde{X}_s^n,\tilde{\mu}_s^n)-b^m(s,\tilde{X}_s^n,\tilde{\mu}_s)\right|\d s\right)\\
        &\leq C\left[\int_{0}^{t}\left(\int_{|x|\leq R}\left|b^n(s,x,\tilde{\mu}_s^n)-b^m(s,x,\tilde{\mu}_s)\right|^p \d x\right)^{\frac{q}{p}}\right]^{\frac{1}{q}}\\
        &\quad+K\int_{0}^{t}\tilde{\P}(|\tilde{X}_s^n|>R)\d s+C\|G\1_{\{|\cdot|>R\}}\|_{L^q([0,T];L^p(\R^d))}.
    \end{align*}
Since $\tilde{X}_t^n\overset{\tilde{\P}\text{-a.s.}}{\longrightarrow} \tilde{X}_t,$ so  $\tilde{X}_t^n\overset{\tilde{\P}}{\longrightarrow} \tilde{X}_t,$ it is clear that 
\begin{equation}
    \lim_{n\to\infty}\mathbb{W}_{\theta}(\tilde{\mu}_t^n,\mu_t)=0,
\end{equation}
and 
\begin{equation}
    \lim_{n\to\infty}\tilde{\P}(|\tilde{X}_t^n|>R)\leq \tilde{\P}(|\tilde{X}_t|\geq {R}/{2}).
\end{equation}
Then it follows (\hyperlink{(H1)}{H1}), (\hyperlink{(H3)}{H3}) in $($\hyperlink{Htheta}{\(\mathbf{H}^{\theta}\)}$)$ that 
\begin{equation}
\lim_{n\to\infty}b^n(t,x,\tilde{\mu}_t^n)-b(t,x,\tilde{\mu}_t)=0,\quad \text{a.e.} \quad t\geq 0,x\in\R^d.
\end{equation}
So by (\hyperlink{H2}{H2}) and the dominated convergence theorem,
\begin{equation}\label{EQ:0912:01}
    \begin{split}
        \lim_{n\to\infty}I_1^{(n,m)}(t)& \leq C\left[\int_{0}^{t}\left(\int_{|x|\leq R}|b(s,x,\tilde{\mu}_s)-b^m(s,x,\tilde{\mu}_s)|^p\d x\right)^{\frac{q}{p}}\right]^{\frac{1}{q}}\\
        &\quad+K\int_{0}^{t}\tilde{\P}(|\tilde{X}_s|>R)\d s+C\|G\1_{\{|\cdot|>R\}}\|_{L^q([0,T];L^p(\R^d))}.
    \end{split}
\end{equation}

Similarly, due to $\tilde{X}_t^n\overset{\tilde{\P}}{\longrightarrow} \tilde{X}_t$ and Claim 2, we have that \eqref{EQ:0912:01} holds for $I_3^{(n,m)}$ replacing $I_1^{(n,m)}$. 

Moreover, since $b^m$ is bounded and continuous, it follows from item (ii) above and the dominated convergence theorem that 
\begin{equation}
    \varlimsup_{n\to\infty}I_2^{(n,m)}(t) =0.
\end{equation}

In conclusion, we arrive at
\begin{equation}
\begin{split}
       & \quad \varlimsup_{n\to\infty}\E\int_{0}^{t}\left|b^n(s,\tilde{X}_s^n,\mathscr{L}_{\tilde{X}_s^n})-b(s,\tilde{X}_s,\mathscr{L}_{\tilde{X}_s})\right|\d s\\
    &\leq 2C\left[\int_{0}^{t}\left(\int_{|x|\leq R}\left|b(s,x,\tilde{\mu}_s)-b^m(s,x,\tilde{\mu}_s)\right|^p\d x\right)^{\frac{q}{p}}\right]^{\frac{1}{q}}\\
        &\quad+2K\int_{0}^{t}\tilde{\P}(|\tilde{X}_s|>R)\d s+2C\|G\1_{\{|\cdot|>R\}}\|_{L^q([0,T];L^p(\R^d))},
\end{split}
\end{equation}
for any $m>0,$ and $R>0.$ Then letting $m\to\infty,$ and $R\to\infty,$ due to the condition (\hyperlink{H1}{H1}), (\hyperlink{H2}{H2}) in $($\hyperlink{Htheta}{\(\mathbf{H}^{\theta}\)}$)$, we obtain from the dominated convergence theorem that 
\begin{equation*}
    \varlimsup_{n\to\infty} \tilde{\E}\left(\int_{0}^{T}\left|b^n(s,\tilde{X}_s^n,\mathscr{L}_{\tilde{X}_s^n})-b(s,\tilde{X}_s,\mathscr{L}_{\tilde{X}_s})\right|\d s\right)=0.
\end{equation*}
In summary, $(\tilde{X},\tilde{L})$ is a weak solution to DDSDE \eqref{EQ:MVSDE:101} with initial distribution $\mu_0.$
\end{proof}


\begin{remark}
    According to \cite[Theorem 1.1]{zhangStochasticDifferentialEquations2013}, when $L$ satisfies Assumption $($\hyperlink{(A)}{$\mathbf{A}$}$)$ and moreover, is exactly a $\alpha$-stable process, then  the condition $($\hyperlink{Htheta}{\(\mathbf{H}^{\theta}\)}$)$ implies that the SDE
    \begin{equation}
        \d X_t =b(t,X_t,\mu_t)\d t+\d L_t,\quad t\geq 0,
    \end{equation}
   where $\mu_t$ is given in \eqref{EQ:SDE:0912}, has a unique strong solution. Thus, by Lemma \ref{LEM:0915:01}, the strong existence of DDSDE \eqref{EQ:MVSDE:101} can follows from the weak existence result  Theorem \ref{THM:0915:01}. 
\end{remark}

\bibliographystyle{plain}

\bibliography{reference_DDSDE_Levy}
\end{document}